\title{On the genericity of loxodromic actions}
\author{Bert Wiest}
\address{Bert Wiest, UFR Math\'ematiques, Universit\'e de Rennes 1, 35042 Rennes Cedex, France}
\email{bertold.wiest@univ-rennes1.fr}
\urladdr{http://perso.univ-rennes1.fr/bertold.wiest}
\newtheorem{theorem}{Theorem}[section]
\newtheorem{lemma}[theorem]{Lemma} 
\newtheorem{proposition}[theorem]{Proposition}
\theoremstyle{definition}
\newtheorem{definition}[theorem]{Definition}    
\newtheorem{remark}[theorem]{Remark}
\newtheorem{openproblems}[theorem]{Open Problems}
\newtheorem{example}[theorem]{Example}
\newtheorem{notation}[theorem]{Notation}
\def\N{{\mathbb N}}
\renewcommand{\phi}{\varphi}
\def\marg#1{}
\begin{document}

\begin{abstract}
Suppose that a finitely generated group~$G$ acts by isometries on a $\delta$-hyperbolic space, with at least one element acting loxodromically. Suppose that the elements of~$G$ have a normal form such that the language of normal forms can be recognized by a finite state automaton. Suppose also that a certain compatibility condition linking the automatic and the $\delta$-hyperbolic structures is satisfied. Then we prove that in the ``ball'' consisting of elements of~$G$ whose normal form is of length at most~$l$, the proportion of  elements which act loxodromically is bounded away from zero, as $l$ tends to infinity. 
We present several applications of this result, including the genericity of pseudo-Anosov braids.
\end{abstract}

\maketitle


\section{Introduction} 

For several decades, it was an unproven, but widely believed slogan that ``generic elements of the mapping class groups of surfaces are pseudo-Anosov''. Here the word ``generic'' admits several different interpretations. For one of the possible interpretations, this slogan has recently been proven in a very satisfactory way. 
Indeed, A.~Sisto~\cite{SistoGeneric}, inspired by previous work of J.~Maher \cite{Maher}, proved that the element obtained by a long random walk in the Cayley graph of the mapping class group of a surface tends to be pseudo-Anosov, with a probability which tends to~$1$ exponentially quickly as the length of the walk tends to infinity. (See also~\cite{Rivin} for a completely different approach.)
Actually, Sisto works in a far more general context: if a finitely generated, non-elementary group acts on a $\delta$-hyperbolic space, then the probability that the element obtained by a random walk in the group acts non-loxodromically 
decays exponentially quickly with the length of the walk. In the special case of the mapping class group acting on the curve complex of the surface, this implies the original slogan.

In the present paper, we use a different notion of genericity. For us, a generic element is not obtained by a long random walk, but it is picked randomly with uniform probability among the elements in a large ``ball'' centered on the identity element in the Cayley graph of the group. Almost nothing was known about the genericity (in this sense) of pseudo-Anosov elements until the paper \cite{CarusoGeneric1} by S.~Caruso.

Caruso's result states that in the braid group~$B_n$, equipped with Garside's generators, the proportion of elements in the ball of radius~$l>1$ in the Cayley graph which are \emph{rigid} and pseudo-Anosov is bounded below by a strictly positive constant (independently of~$l$). The word ``rigid'' will be defined below -- for the moment we just point out that the proportion of rigid elements does definitely \emph{not} tend to~$1$ as $l$~tends to infinity. 

In the present paper we generalize Caruso's result to a much larger framework. Here is a rough statement of our main result -- for a detailed and more general statement see Theorem~\ref{T:main} and Proposition~\ref{P:rigid}.
\smallskip

{\bf Theorem} \ \,{\sl Consider a finitely generated group~$G$ which acts on a $\delta$-hyperbolic space~$X$, with at least one element acting loxodromically. Suppose that~$G$ satisfies a certain weak automaticity condition, and also a certain compatibility condition linking the automatic and the $\delta$-hyperbolic structures. Then, among the elements of~$G$ whose automatic normal form is of length at most~$l$, the proportion of those which are rigid and act loxodromically is bounded below by a positive constant. Moreover, these rigid loxodromic elements have the property that their axis in~$X$ passes uniformly close to a pre-defined base point of~$X$.}
\smallskip

We present several applications of this result. 
\marg{Referee: The author should also state explicitly whether or not the results
hold for Gromov hyperbolic groups, and compare and contrast his
approach with that of Calegari and Fujiwara (for example in "Combable
functions, quasimorphisms, and the central limit theorem" and "Stable
commutator length in word hyperbolic groups") who also make use of
automatic structures on groups.}
Firstly, we show that a positive proportion of elements of~$\mathrm{PSL}(2,\mathbb Z)$ act on the Farey graph in a loxodromic fashion -- this is already well known, but a nice illustration of our proof techniques. 
Secondly, we show that 
the above result holds for word-hyperbolic groups, acting on their own Cayley graphs.
Thirdly, we explain why there is hope that Theorem~\ref{T:main} can be applied in the realm of relatively hyperbolic groups, acting on their coned-off Cayley graphs. 
Fourthly, we study the action of the braid group~$B_n$ on the so-called  ``additional length complex''. We prove that generic elements act in a loxodromic manner, and hence are pseudo-Anosov. We also re-prove Caruso's result. 
Finally, we look at the fundamental groups of compact, non-positively curved cubical complexes and their action on the associated contact graphs. We prove that, under mild hypotheses, a positive proportion of elements acts loxodromically.


\section{Statement of the main result}

Throughout this paper, we will be interested in the properties of generic elements of a monoid~$G$. We always suppose that each element $g$ of~$G$ has a preferred representative word $g=g_1\cdot\ldots\cdot g_k$, called the \emph{normal form} of the element. The letters $g_i$ must belong to some finite set~$\mathcal S$ of arrows of a category~$\hat G$ which contains the monoid~$G$ as a sub-category: $G\subset \hat G$. (We recall that a monoid is a category with only one object.) 

We also suppose that the language of normal forms of elements of~$G$ is recognized by a deterministic finite state automaton (FSA).
We recall briefly what this means: we have a finite graph, whose vertices are called the \emph{states} of the automaton, and whose edges are oriented and labelled by elements of~$\mathcal S$. From each state there is exactly one exiting edge for every element of~$\mathcal S$. 
There is a unique \emph{start state} -- we shall assume that the start state is not the target of any arrow. There's also a unique \emph{fail state} -- all edges exiting the fail state immediately loop back to it. 
Some of the states (possibly all except the fail state) are labelled as \emph{accept states}.
A word over the alphabet~$\mathcal S$ is \emph{recognized} or \emph{accepted} by the automaton if the path in the automaton beginning at the start state and following the edge labels of the word ends in an accept state. 
For an introduction to the theory of finite state automata, the reader can consult \cite[Chapter~1]{ECHLPT} or~\cite{NeumannShapiro}.

\begin{remark}
(1) For many examples we are interested in, 
the equality $\hat G=G$ holds. Moreover, often $G$ and $\hat G$ are actually \emph{groups}. 
In that case, the normal form is simply a unique preferred writing of each element of the group~$G$ as a product of the generators $\mathcal S$ of~$G$. However, our more general setup is often useful, and comes at no extra cost. 

(2) It may happen that there are pairs of elements $g_1, g_2$ of $\mathcal S$ such that $g_1g_2$ is not well-defined in~$\hat G$ -- this occurs if the category~$\hat G$ has more than one object and if the target object of~$g_1$ does not coincide with the source object of~$g_2$.  However, such a string $g_1g_2$ cannot appear in any word recognized by our FSA --  it is necessarily sent to the fail state.
\end{remark}

We say a word~$w$ over the alphabet~$\mathcal S$ is \emph{rigid} if the word $w^n$ (the word repeated $n$ times) is accepted by the automaton for all positive integers~$n$.
For instance, this is the case if the words $w$ and $w^2$ are both accepted, and if the paths traced out in the automaton by the two words end in the same state. We say an element $g$ of~$G$ is \emph{rigid} if the normal form word representing it is rigid, i.e.\ if the normal form of~$g^n$ is equal to the normal form of~$g$, repeated $n$~times. In the past, rigid elements have only been defined for Garside groups \cite{BGGMrigid}, but the preceding definition is just an obvious generalization.

If $A$ is the transition matrix of the automaton (ignoring the start- and fail state), then the $(i,j)$th entry of $A^l$ counts the number of words of length~$l$ starting at the $i$th and ending at the $j$th state. This makes it possible to use Perron-Frobenius theory to asymptotically count words~\cite{BrouwerHaemers,Varga}. 

We say a finite state automaton is \emph{recurrent} if there is some integer~$l$ such that there is a path of length exactly~$l$ from from any state (other than the start state or fail state) to itself and to any other of those states. In terms of matrices, this means that $A^l$ is a matrix with all entries strictly positive -- this property is very useful in Perron-Frobenius theory.
The automata which we are interested in in practice (related to mapping class groups, for instance) are unfortunately not always recurrent; nevertheless the reader should keep the recurrent case in mind, because it is simpler while already containing the key of the argument.

For the purposes of this paper we introduce another notation (which is unnecessary if the automaton under consideration is recurrent). We consider the set of states which can be reached from all non-fail states of the automaton. This set of states, together with the edges connecting them, will be called the \emph{accessible sub-automaton}, and its states the \emph{accessible states}.
Note that the accessible sub-automaton may be empty, but when it is not, then paths can enter, but never leave it. 
We shall say a FSA is \emph{dominated} by its accessible sub-automaton if the sub-automaton is non-empty, recurrent, 
and if the exponential growth rate of the number of words readable in its accessible sub-automaton is strictly larger than that of the original automaton, with the sub-automaton removed. 

\begin{definition}
We say that $G$~satisfies the \emph{automatic normal form hypothesis} if every element of~$G$ has a unique preferred word over the alphabet~$\mathcal S$ representing it, and if the language of preferred representatives (normal forms) is recognized by a deterministic finite state automaton (FSA) with the following properties~: 
\begin{itemize}
\item the automaton must be dominated by its accessible sub-automaton, and
\item there is at least one word $w_{\rm rigid}$ such that both $w_{\rm rigid}$ and $w_{\rm rigid}^2$ are accepted by the automaton, where the paths traced out in the automaton by the two words both end in the same accept state~$E$. Moreover, this state~$E$ has to lie in the accessible sub-automaton.
\end{itemize} 
\end{definition}

Note that the automatic normal form hypothesis does not require any kind of the fellow traveller conditions familiar from automatic or combable groups. It is really a fairly mild hypothesis, as the following examples illustrate. 

\begin{example}\label{E:RecurrentANFH}
The automatic normal form hypothesis is satisfied for instance if the language of normal forms is recognized by a FSA which is recurrent and which moreover contains an accept state~$V$ and a letter $g\in\mathcal S$ such that the two arrows labelled~$g$ exiting from $V$ and from the start state end in the same, non-fail state. Indeed, under these hypotheses, the accessible sub-automaton is the whole automaton (except start- and fail state), and any word traced out in the FSA which starts with the letter~$g$ and ends in the state~$V$, can play the role of the word $w_{\rm rigid}$. 
\end{example}

\begin{example}\label{E:B_n^+}
We consider $G=B_n^+$, the positive braid monoid, equipped with Garside's generating set~$\mathcal S$ of nontrivial positive permutation braids, which has $n!-1$ elements. We set $\hat G=G$, and construct a FSA which has one start state, one fail state, and $n!-1$ other states, which are labelled by the $n!-1$ generators, and which are all accept states. In this automaton, every edge is labelled by one of the $n!-1$ non-trivial positive permutation braids, and an edge labelled $g$ either terminates in the state labelled $g$ or in the fail state. The automaton is constructed so as to encode the left-weighting condition of the Garside normal form, see \cite{ElrifaiMorton}.
The accessible sub-automaton of this automaton contains $n!-2$ states, corresponding to all positive permutation braids other than $1$ and~$\Delta$.
As proven in~\cite{CarusoGeneric1}, it is recurrent with~$l=5$. The normal form of any rigid braid can play the role of the word~$w_{\rm rigid}$. Thus the automatic normal form hypothesis is satisfied in this case.
\end{example}


\begin{example}\label{E:pi_1(CAT(0))}
Let $Y$ be a non-positively curved (locally CAT(0)) cubical complex, and let $b$ be one of its vertices. Let $G=\pi_1(Y,b)$ be its fundamental group, with base point in~$b$. Let $\hat G$ be the fundamental groupoid of $Y$, where the set of base points equals the set of vertices of~$Y$ -- so $G$ is a sub-group of the groupoid~$\hat G$. Let $\mathcal S$ be the set of all oriented diagonal paths across all cubes; thus every $n$-dimensional cube in~$Y$ gives rise to $2^n$ elements of~$\mathcal S$. 
It was shown by Niblo and Reeves~\cite{NibloReeves} that the fundamental groupoid (and hence also the fundamental group~$G$) is bi-automatic. Their normal form for elements of~$\hat G$ uses the alphabet~$\mathcal S$, and is left-greedy: every hyperplane is traversed as early as possible. This normal form can be recognized by a FSA, which has a start state, one state for every element of~$\mathcal S$, and a fail state. 
An edge labelled $g\in\mathcal S$ can lead either to the state labelled~$g$, or to the fail state. States labelled by paths which end in the base point~$b$ are accept states. If this FSA happens to be recurrent, then the automatic normal form hypothesis is satisfied, by the criterion in Example~\ref{E:RecurrentANFH}.\marg{I hope that argument is detailed enough.}

Depending on the nature of the complex~$Y$, the FSA may or may not be  recurrent. For instance, if $Y$ is the standard torus constructed from one vertex, two edges and one square, then the associated FSA is \emph{not} recurrent: a horizontal edge path can only be followed by another horizontal edge path in the same direction, and nothing else. By contrast, Figures~2 and~3 of \cite{CrispWiest1} give decompositions of closed surfaces~$S$, with $\chi(S)<0$, as non-positively curved cube complexes; the finite state automata associated to these cubings can be checked to be recurrent. This concludes Example~\ref{E:pi_1(CAT(0))}.
\end{example}

\begin{notation}
For an integer $l$, we denote $B(l)$ be the set of elements of $G$ whose normal form is of length at most~$l$. Warning: since the normal form words need not be geodesics in the Cayley graph, this set does not necessarily coincide with the ball of radius~$l$ and center $1_G$ in the Cayley graph of~$G$.
\end{notation}
\medskip

Now we suppose\marg{I think it's not necessary to explain what it means for a category to act on the left on a set.} that~$\hat G$ acts on the left by isometries on a $\delta$-hyperbolic complex~$X$ (which need not be proper). Let~$P$ be a point of~$X$ -- we think of it as a base point, and it is fixed once and for all. Then we define $c=\max_{g\in\mathcal S}d_X(P,g.P)$. (It is philosophically satisfying, but of no mathematical importance, to choose~$P$ so as to make~$c$ as small as possible.) Now if $x_1x_2x_3\ldots$ is a word in the alphabet~$\mathcal S$ which is well-defined in~$\hat G$, then any two successive points in the sequence $P$, $x_1.P$, $x_1x_2.P$, $x_1x_2x_3.P, \ldots$ in~$X$ are at distance at most~$c$. 

We shall need a compatibility condition between the automatic and the hyperbolic structures: 

\begin{definition}
Suppose a category~$\hat G$ acts on on a $\delta$-hyperbolic complex $(X,P)$, and that the sub-monoid $G\subset \hat G$ satisfies the automatic normal form hypothesis.
\marg{Don't really need the full automatic normal form hypothesis in the definition -- it would suffice to talk about a family of words over an alphabet $\mathcal S$ which acts on $X$. But it's easier to write this way.} 
We say that the \emph{geodesic words hypothesis} is satisfied if there exists a constant $R\geqslant 0$ such that for every  
normal form word $x_1x_2x_3\ldots$, and for every $i,j\in\N$ with $0\leqslant i<j$, the sequence of points 
$$x_1x_2\ldots x_i.P, \ \ x_1x_2\ldots x_{i+1}.P, \ \ldots\ldots \ , x_1x_2\ldots x_j.P$$
stays in the $R$-neighbourhood of every geodesic connecting its end-points. 
\end{definition}

\begin{remark}
(a) The geodesic words hypothesis is actually equivalent\marg{Reference for this equivalence?} to the condition that the family of paths
$$\{(P, \ x_1.P, \ x_1x_2.P, \ x_1x_2x_3.P,  \dots) | \ x_1x_2x_3\ldots \textrm{ a word recognized by the FSA} \}$$
forms a uniform family of unparameterized quasi-geodesics in $X$. However, we shall not use this equivalent point of view. 

(b) The geodesic words hypothesis is a \emph{shadowing} condition: we require that the ``shadows'' of normal form words to~$X$ are unparametrized quasi-geodesics. For work on related conditions, in the context of mapping class groups and $Out(F_n)$, see~\cite{LRT}, and the references cited in the introduction of that paper.
\end{remark}

We recall the classification of isometries of $\delta$-hyperbolic spaces: an isometry~$\phi$ of~$X$ acts \emph{elliptically} if the orbit $P$, $\phi(P)$, $\phi(\phi(P))$, $\phi(\phi(\phi(P))),\ldots$ in~$X$ is bounded. It acts \emph{parabolically} if the orbit is unbounded but $\lim_{n\to\infty}\frac{1}{n}\cdot d_X(P, \phi^n(P))= 0$. In this case, the sequence of points $P$, $\phi(P)$, $\phi^2(P),\ldots$ of $X$, is \emph{not} a quasi-geodesic in~$X$: the point $\phi^n(P)$ gets arbitrarily far away from any geodesic connecting $P$ and~$\phi^{2n}(P)$; moreover,  the isometry of $X$ given by the action of~$\phi$ has a unique fixed point on~$\partial X$.  
Finally, $\phi$ acts \emph{loxodromically} (or \emph{hyperbolically}) if $\lim_{n\to\infty}\frac{1}{n}\cdot d_X(P, \phi^n(P))$ exists and is positive. In this case, the orbit of~$P$ under $\langle \phi\rangle$ is a quasi-geodesic, and the isometry of $X$ given by the action of $\phi$ has two fixed points on~$\partial X$. For a proof that any element belongs to exactly on of these three classes see \cite[Section 9]{CDP} (which requires $X$ to be proper, but the proof does not use this hypothesis). See also~\cite[Section 3]{CCMT}. 

\begin{notation}
Suppose a category~$\hat G$ acts on a $\delta$-hyperbolic space, and that the sub-monoid~$G$ satisfies the automatic normal form hypothesis. We say that \emph{there are interesting loxodromic actions} if  there exists a loop in the accessible sub-automaton such that the product of elements of~$\mathcal S$ read along this loop acts loxodromically on $X$. This is for instance the case if there are rigid elements which act loxodromically.
\end{notation}

We are now ready to state our main result.

\begin{theorem}\label{T:main}
Let $G$ be a monoid and $\hat G$ a category with $G\subset \hat G$, and with~$\hat G$ acting on a $\delta$-hyperbolic complex~$X$. Suppose that the automatic normal form hypothesis and the geodesic words hypothesis are satisfied. Suppose further that there are interesting loxodromic actions. Then
$$
\liminf_{l\to\infty} \frac{\#\{\textrm{elements of }B(l)\textrm{ which are rigid and act loxodromically on }X\}}{\#B(l)} > 0.
$$
\end{theorem}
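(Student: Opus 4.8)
The plan is to separate the argument into a geometric criterion for loxodromicity and a combinatorial counting argument, and to arrange the construction so that the two become independent. The geometric heart is the following criterion: there is a threshold $D_0=D_0(\delta,R)$ such that every rigid element $g$ with $d_X(P,g.P)\geq D_0$ acts loxodromically. Granting this, it suffices to exhibit, for each large $l$, a set of rigid elements of $B(l)$ whose displacement is at least $D_0$ and whose cardinality is a fixed positive fraction of $\#B(l)$.

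To prove the criterion I would consider the bi-infinite broken path $\Gamma=\bigcup_{k\in\Z}[g^k.P,g^{k+1}.P]$ coming from the orbit of $P$ under the (isometric, hence invertible) action of $g$. Since $g$ is rigid, the normal form of $g^2$ is $w^2$, so the geodesic words hypothesis applied to $w^2$ places the junction point $g.P$ within $R$ of $[P,g^2.P]$; translating by the appropriate power of $g$ this gives $(g^{k-1}.P\mid g^{k+1}.P)_{g^k.P}\leq R+O(\delta)$ at every junction. Each segment of $\Gamma$ has length $d_X(P,g.P)\geq D_0$. Choosing $D_0$ larger than twice this Gromov-product bound plus a suitable multiple of $\delta$, the standard local-to-global principle for broken geodesics in hyperbolic spaces (see e.g.\ \cite{CDP}) shows $\Gamma$ is a uniform quasi-geodesic; hence $d_X(P,g^k.P)$ grows linearly and $g$ is loxodromic. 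As a bonus, $\Gamma$ passes through $P$ and lies uniformly close to the axis, which yields the ``moreover'' part of the rough statement.

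The construction runs as follows. Prepending $w_{\rm rigid}$ to any loop $u$ based at the state $E$ produces a rigid word: since $w_{\rm rigid}$ runs from the start state to $E$ and, because $w_{\rm rigid}^2$ is accepted, also from $E$ back to $E$, the word $w_{\rm rigid}\cdot u$ runs from the start to $E$ and traces a loop at $E$, so all its powers are accepted. To force large displacement independently of the tail, I would fix one long loop $V$ at $E$ whose shadow reaches far from $P$: using that the accessible sub-automaton is recurrent, connect $E$ to the loxodromic loop provided by the interesting-loxodromic-actions hypothesis, wind around it $N$ times, and return to $E$; for $N$ large the shadow of $V$ contains a point at distance at least $D_0+R$ from $P$. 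Then for any loop $z$ at $E$ the word $w=w_{\rm rigid}\cdot V\cdot z$ is rigid, and since its shadow reaches distance $\geq D_0+R$ from $P$ during the $V$-block while staying (by the geodesic words hypothesis) within $R$ of $[P,[w].P]$, its endpoint satisfies $d_X(P,[w].P)\geq D_0$. Thus every such $w$ is rigid and loxodromic, regardless of $z$ -- this is the key point that defuses the danger that the tail $z$ cancels the loxodromic progress.

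It remains to count. The number of admissible tails $z$ of length $l-|w_{\rm rigid}|-|V|$ is the number of closed walks of that length at $E$ in the accessible sub-automaton, which by the Perron--Frobenius theorem for the primitive (recurrent) matrix $A_{\rm acc}$ is asymptotic to a positive constant times $\lambda^{\,l-c}$, for a fixed constant $c$, where $\lambda>1$ is its dominant eigenvalue. On the other hand, writing the transition matrix of the whole automaton in block-triangular form with the absorbing accessible part as the bottom block, the domination hypothesis forces the non-accessible block to have strictly smaller growth rate, so $\lambda$ is a simple dominant eigenvalue of the full matrix and $\#B(l)\asymp\lambda^{l}$. Since distinct tails give distinct normal forms and hence distinct elements of $B(l)$, the desired ratio is bounded below by a positive constant. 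The main obstacle is precisely the geometric criterion together with the displacement-forcing observation: one must extract honest linear progress (loxodromicity, not merely a quasi-geodesic shadow, which a priori does not even exclude an elliptic element with bounded orbit) from the purely local shadowing hypothesis, and the device of inserting a far-reaching block and then using the geodesic words hypothesis to convert an interior point far from $P$ into a large \emph{net} displacement is what makes the count robust against arbitrary tails.
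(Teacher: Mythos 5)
Your proof is correct, and although it shares the paper's skeleton (prepend $w_{\rm rigid}$ to loops at the state~$E$ to manufacture rigid words, force loxodromicity via a far-reaching block, count with Perron--Frobenius under the domination hypothesis), it genuinely diverges at the two technical hinges. For the loxodromicity criterion, the paper (Lemma~\ref{L:NonPaObvious}, Claims A and B) shows that a rigid word acting elliptically moves $P$ by at most $3R$ and that \emph{every subword} of it moves $P$ by at most $5R$, via a pigeonhole argument that applies the geodesic words hypothesis to arbitrarily high powers $w^N$ and never invokes $\delta$-hyperbolicity, so its threshold is independent of~$\delta$; you instead use the hypothesis only for $w^2$ to bound the Gromov products at the breakpoints of the broken orbit path and then appeal to the local-to-global concatenation lemma of \cite{CDP}, which is equally valid (and kills the elliptic and parabolic cases in one stroke) at the cost of a threshold $D_0(\delta,R)$. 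For forcing large displacement, the paper's subword criterion lets it take arbitrary rigid words $w_{\rm rigid}w_\circlearrowleft$ and argue that generically $w_\circlearrowleft$ \emph{contains} the far-moving subword $w_{\rm far}$ (Lemma 3.5(iii) of \cite{CarusoGeneric1}), so loxodromicity is generic among the rigid words produced; you instead hard-wire a fixed loop $V$ after $w_{\rm rigid}$ and use the shadowing hypothesis a second time to convert the interior excursion to distance $D_0+R$ into a lower bound $D_0$ on the \emph{net} displacement, independently of the tail. This sidesteps the ``generic words contain a given subword'' lemma and reduces the count to closed walks at~$E$, but yields a smaller explicit family and no statement that non-loxodromic rigid words are negligible. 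Two small repairs: a monoid element need not act invertibly, so run the broken-path argument on the one-sided ray $\bigcup_{k\geqslant 0}[g^k.P,g^{k+1}.P]$, which suffices for loxodromicity; and your step $\#B(l)\asymp\lambda^l$ tacitly assumes $\lambda>1$, so in general pass from spheres to balls by summing the sphere estimates over $j\leqslant l$, as the paper does in its final paragraph.
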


We remark that the proportion of rigid elements does definitely \emph{not} tend to~1 as $l$ tends to infinity -- in this sense, Theorem~\ref{T:main} cannot be improved upon. Being rigid is a very interesting property, in that it has remarkable dynamical consequences. Very roughly speaking, for a rigid element, the guts of the action happen close to the base point:

\begin{proposition}\label{P:rigid}
Suppose the hypotheses of Theorem~\ref{T:main} are satisfied.
\marg{Again, I don't really need the full automatic normal form hypothesis.} 
Let $P$ be the base point of~$X$, and let $R$ be the constant appearing in the geodesic words hypothesis. Let $g$ be a rigid element of~$G$. 
\begin{itemize}
\item $g$ cannot act parabolically on~$X$.
\item If $g$ acts elliptically, then $d_X(P,g^n.P)\leqslant 3R$ for all~$n$. 
\item If $g$ acts loxodromically, and if $a\subset X$ is a geodesic axis, then the base point~$P$ is at distance at most~$R$ from~$a$.
\end{itemize}
\end{proposition}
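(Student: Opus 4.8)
The plan is to use rigidity to realise the orbit sequences $(g^m.P)_m$ as sub-sequences of shadows of genuine normal-form words, so that the geodesic words hypothesis applies to them. Write the normal form of~$g$ as $w=x_1\cdots x_k$. Rigidity means that for every~$n$ the word $w^n$ is the normal form of $g^n$, and that the point $g^m.P$ sits at position $mk$ in the shadow of $w^n$ for $0\leqslant m\leqslant n$. Feeding the geodesic words hypothesis the word $w^{b}$ with indices $i=ak$ and $j=bk$ (where $a<b$) and reading off the intermediate index $ck$ therefore gives the single fact on which the whole proof rests: for all integers $0\leqslant a\leqslant c\leqslant b$, the point $g^c.P$ lies in the $R$-neighbourhood of \emph{every} geodesic joining $g^a.P$ to $g^b.P$. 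I also record that, each $g^m$ being an isometry of~$X$, one has $d_X(g^a.P,g^b.P)=d_X(P,g^{\,b-a}.P)$.

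The parabolic case is then immediate by contradiction: the characterisation recalled before the statement says a parabolic~$g$ would send $g^n.P$ arbitrarily far from any geodesic joining $P$ and $g^{2n}.P$, whereas the fact above with $(a,c,b)=(0,n,2n)$ keeps $g^n.P$ within~$R$ of every such geodesic; taking $n$ large contradicts this. For the elliptic case, put $f(n)=d_X(P,g^n.P)$, so that $M:=\sup_n f(n)<\infty$. In the geodesic triangle $P,\,g^n.P,\,g^{2n}.P$, pick $q$ on a geodesic $[P,g^{2n}.P]$ with $d_X(g^n.P,q)\leqslant R$; then $f(n)\leqslant d_X(P,q)+R$ and, since $d_X(g^n.P,g^{2n}.P)=f(n)$, also $f(n)\leqslant d_X(q,g^{2n}.P)+R$. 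Adding these and using $d_X(P,q)+d_X(q,g^{2n}.P)=f(2n)$ gives $2f(n)\leqslant f(2n)+2R$; taking the supremum over~$n$ yields $2M\leqslant M+2R$, hence $d_X(P,g^n.P)\leqslant M\leqslant 2R\leqslant 3R$ for all~$n$.

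For the loxodromic case I would first upgrade the basic fact to a two-sided statement. Because $g$ acts as a bijective isometry, so does $g^{-N}$; applying it to the shadow of $w^{2N}$ carries the points $g^m.P$ $(0\leqslant m\leqslant 2N)$ to the points $g^{\,m-N}.P$ $(-N\leqslant m-N\leqslant N)$, among which $P$ itself occurs, and it carries a geodesic $[P,g^{2N}.P]$ to a geodesic $[g^{-N}.P,g^{N}.P]$ while preserving $R$-neighbourhoods. Hence $P$ lies within~$R$ of every geodesic $[g^{-N}.P,g^{N}.P]$. Now let $a$ be a geodesic axis, that is, a bi-infinite geodesic whose ends are the two fixed points $g^{\pm}$ of~$g$ on~$\partial X$; since $g^{-N}.P\to g^{-}$ and $g^{N}.P\to g^{+}$ as $N\to\infty$, I would conclude $d_X(P,a)\leqslant R$ by letting these segments converge to~$a$.

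The main obstacle is precisely this passage to the limit, complicated by the fact that~$X$ is \emph{not assumed proper}, so one cannot invoke Arzel\`a--Ascoli to extract a limiting geodesic. The plan is to control the segments $[g^{-N}.P,g^{N}.P]$ by stability rather than compactness: the orbit $(g^n.P)$ is a quasi-geodesic (loxodromic case of the classification) sharing its ideal endpoints $g^{\pm}$ with~$a$, so the Morse lemma keeps it in a bounded neighbourhood of~$a$, and then each segment, having both endpoints near~$a$, stays uniformly near~$a$ by thinness of triangles. This already gives $d_X(P,a)\leqslant R+O(\delta)$; squeezing the additive constant down to the sharp value~$R$ claimed in the statement is the delicate point, and I would handle it by arranging the comparison so that the only surviving error comes from the $R$-neighbourhood relation itself, which is closed and passes to the limit, rather than from the Morse constant.
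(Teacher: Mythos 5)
Your parabolic argument and the first half of your loxodromic argument coincide with the paper's own proof: the paper derives non-parabolicity from exactly the same contradiction with the shadowing of $w^n$, and for the loxodromic case its entire proof consists of the two observations you reach, namely that $g^k.P$ is $R$-close to any geodesic from $P$ to $g^{2k}.P$, hence $P$ is $R$-close to any geodesic from $g^{-k}.P$ to $g^k.P$ --- at which point it simply asserts the conclusion (a suppressed margin note in the source even asks ``Is it clear that this implies that $P$ is $R$-close to the axis?''). So the passage to the limit that you flag is a real gap, but it is the paper's gap as much as yours: in a non-proper space the segments $[g^{-N}.P,g^{N}.P]$ need not converge to $a$, they only fellow-travel it up to an additive multiple of $\delta$, and it is not clear how to make that thinness constant disappear. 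The honest conclusion of this line of argument is $d_X(P,a)\leqslant R+O(\delta)$ rather than the sharp $R$ of the statement; your instinct that the constant itself is the delicate point is exactly right, and you are not worse off than the paper here.

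Where you genuinely diverge --- and improve on the paper --- is the elliptic case. The paper's Claim~A is a pigeonhole argument: assuming $d_X(P,g.P)=3R+\epsilon$, it places the points $g^{i}.P$, $0\leqslant i\leqslant N$, within $R$ of a geodesic $[P,g^{N}.P]$ whose length is at most the orbit diameter $\rho$, finds $i<j$ whose projections are $\epsilon$-close once $\rho/N<\epsilon$, and contradicts the shadowing of the subword between them. Your doubling inequality $2f(n)\leqslant f(2n)+2R$, obtained by splitting $[P,g^{2n}.P]$ at the near-projection of $g^{n}.P$, is shorter, invokes the geodesic words hypothesis only once per $n$, and yields the strictly better bound $\sup_n d_X(P,g^{n}.P)\leqslant 2R$. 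It is correct as written: boundedness of the orbit is all that is needed to take the supremum, and rigidity of $g$ guarantees that $w^{2n}$ is a normal form word so that the hypothesis applies at the midpoint.
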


\begin{openproblems} 
\begin{enumerate}
\item It should be possible to use Theorem~\ref{T:main} in order to study the proportion of pseudo-Anosov elements in all mapping class groups. We recall that a mapping class is pseudo-Anosov if and only if it acts loxodromically on the curve complex of the surface. It seems likely that any mapping class group, equipped with any reasonable normal form recognized by a FSA, satisfies the geodesic words hypothesis. One tool for checking the geodesic words hypothesis could be the fact~\cite{MM4} that train track splitting sequences give rise to unparametrized quasi-geodesics in the curve complex -- see also~\cite{MasMoshSchl}. 
\item Apply the theorem in other contexts, for instance of $Out(F_n)$ acting on the free factor complex $\mathcal{FF}_n$. We recall that in this setup, an element acts loxodromically if and only if it is iwip. 
\item Prove the stronger result that the proportion of elements of~$G$ which act loxodromically actually tends to~$1$ (we have only proven that its $\liminf$ is positive). One possible strategy for doing so involves constructing a ``blocking element'' in~$G$, analogue to the blocking braids of~\cite{CarusoWiestGeneric2}.

\item Strengthen the statements by proving that the translation distance in~$X$ of the action of generic elements with normal forms of length~$l$ is bounded below by a linear function in~$l$ (c.f.\ \cite{MaherLin}).

\item Another possible strengthening would be the genericity not only of loxodromic, but even of  contracting actions, in the sense of Sisto~\cite{SistoGeneric}.
\end{enumerate}
\end{openproblems}


\section{Proofs}

\begin{proof}[Proof of Theorem~\ref{T:main}]
Throughout the proof, our strategy for counting \emph{elements} of~$G$ with various properties will be to count \emph{normal form words} representing elements with the same properties.

The first step of the argument is to prove that  the proportion of elements which are rigid is bounded away from zero. 

\begin{lemma}\label{L:RigidNotRare}
$$
\liminf_{l\to\infty} \frac{\#\{\textrm{rigid words of length }l\textrm{ recognized by the FSA}\}}{\phantom{rigi}\#\{\textrm{words of length }l\textrm{ recognized by the FSA}\}} > 0.$$
\end{lemma}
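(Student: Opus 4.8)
The plan is to compare both the numerator and the denominator to a single exponential $\lambda^l$, where $\lambda$ denotes the Perron--Frobenius eigenvalue of the transition matrix $A_{\mathrm{acc}}$ of the accessible sub-automaton. Since this sub-automaton is recurrent, $A_{\mathrm{acc}}$ is primitive, so Perron--Frobenius theory gives $A_{\mathrm{acc}}^m/\lambda^m\to vw^{\top}$ entrywise, where $v,w$ are strictly positive right and left eigenvectors; in particular $(A_{\mathrm{acc}}^m)_{EE}\sim (v_E w_E)\,\lambda^m$ with $v_E w_E>0$, where $E$ is the accept state provided by the automatic normal form hypothesis.

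First I would produce an explicit exponentially large family of rigid words, bounding the numerator from below. Let $p=|w_{\mathrm{rigid}}|$. Since $w_{\mathrm{rigid}}$ and $w_{\mathrm{rigid}}^2$ both end at $E$, reading $w_{\mathrm{rigid}}$ starting from $E$ returns to $E$. Hence, for any word $z$ labelling a path from $E$ to $E$ inside the accessible sub-automaton, the concatenation $w_{\mathrm{rigid}}\,z$ read from the start state ends at $E$, and read from $E$ again returns to $E$ (the suffix $z$ stays valid because it was chosen inside the absorbing set $\mathcal A$ of accessible states); by the sufficient criterion for rigidity recalled just before the definition of the automatic normal form hypothesis, $w_{\mathrm{rigid}}\,z$ is therefore rigid. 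Distinct $z$ give distinct rigid words, so the number of rigid words of length $l$ is at least the number of length-$(l-p)$ loops at $E$, namely $(A_{\mathrm{acc}}^{\,l-p})_{EE}\sim (v_E w_E)\lambda^{-p}\lambda^l$. Thus the numerator is at least $c_1\lambda^l$ for all large $l$.

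Next I would bound the denominator from above by $c_2\lambda^l$. Every recognized word is a path from the start state, and such a path first runs through the non-accessible states $\mathcal B$ and then, once it enters the absorbing set $\mathcal A$, stays there. Writing $\mu<\lambda$ for the spectral radius of the transition matrix of $\mathcal B$ (the strict inequality is exactly the domination hypothesis), the Jordan form gives $\|A_{\mathcal B}^i\|\le C\,i^d\mu^i$, while primitivity gives $\|A_{\mathrm{acc}}^j\|\le C'\lambda^j$. Splitting a length-$l$ path at the moment it enters $\mathcal A$ and summing the resulting convolution yields
$$\#\{\textrm{recognized words of length }l\}\ \le\ C''\lambda^l\sum_{i\ge 0} i^d (\mu/\lambda)^i\ \le\ c_2\lambda^l,$$
the geometric series converging precisely because $\mu<\lambda$. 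Combining the two bounds, the ratio in the statement is at least $c_1/c_2>0$ for all large $l$, so its $\liminf$ is positive.

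The main obstacle is the denominator estimate: one must rule out an extra polynomial factor (for instance from a Jordan block, or from a resonance between $\mathcal A$ and $\mathcal B$) which would be harmless for the exponential growth \emph{rate} but fatal for a clean positive $\liminf$ of the ratio. This is exactly where the \emph{strict} inequality in the domination hypothesis is indispensable, since it forces the convolution sum $\sum_i i^d(\mu/\lambda)^i$ to converge; without it the denominator could grow like $l\,\lambda^l$ and the argument would collapse to the weaker statement comparing growth rates. The only other point needing care is the verification that each $w_{\mathrm{rigid}}\,z$ is genuinely rigid, that is, that the path it traces from $E$ never falls into the fail state, which is guaranteed because $\mathcal A$ is absorbing and $z$ was chosen to label a path inside it.
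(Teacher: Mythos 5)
Your argument is correct and follows essentially the same route as the paper's: both produce the family of rigid words $w_{\rm rigid}\,z$ with $z$ a loop at $E$ inside the accessible sub-automaton, and both use the domination hypothesis to ensure these form a positive proportion of all recognized words. The only difference is organizational: you carry out the Perron--Frobenius counting explicitly (the asymptotics of $(A_{\rm acc}^{m})_{EE}$ for the numerator and the convolution bound over paths passing through the non-accessible states for the denominator), whereas the paper delegates exactly these estimates to the domination hypothesis and to Lemma~3.5 of \cite{CarusoGeneric1}.
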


\begin{proof} We recall that there is, by hypothesis, a path in the automaton, starting at the start state and ending at some accept state $E$ in the accessible sub-automaton, tracing out a word $w_{\rm rigid}$ in the automaton such that the path along the word~$w_{\rm rigid}^2$ ends in the same state~$E$. In particular, the word $w_{\rm rigid}$ is rigid.
\marg{I think this explanation is sufficiently detailed.} 
Since the automaton is dominated by the accessible sub-automaton, we have
$$
\liminf_{l\to\infty} \frac{\#\{\textrm{words of length }l\textrm{ with prefix }w_{\rm rigid},\textrm{ recognized by the FSA}\}}{\phantom{}\#\{\textrm{words of length }l\textrm{ recognized by the FSA}\}} > 0.$$

Next we observe that, for any word~$w_\circlearrowleft$ read out along a loop starting and ending in the state~$E$, the word $w_{\rm rigid}w_\circlearrowleft$ is also rigid. Let us denote the by~$k$ the length of the word~$w_{\rm rigid}$. Now, among the paths of length $l-k$ in the automaton (in fact in the accessible sub-automaton) which start at the state~$E$, the proportion of those which also end at~$E$ tends to some positive number as $l$~tends to~$\infty$ -- for a proof of this fact, see for instance \cite[Lemma 3.5(i) and (ii)]{CarusoGeneric1}.

In summary, among the words of length~$l$, there is a positive proportion which starts with the word~$w_{\rm rigid}$, and among those, there is a positive proportion of words which end at the state $E$. Since all these words are rigid, the lemma is proven.
\end{proof}

We now turn to the second step of the proof of Theorem~\ref{T:main}, namely proving that among the rigid elements constructed in the previous step, the proportion of loxodromically acting ones tends to 1. (Actually, the convergence happens exponentially quickly, but we will not use or prove this.)  

Let $w$ be a word in normal form which is rigid -- this means that the word $w^n$ is also in normal form for all positive integers~$n$, and thus that the geodesic words hypothesis applies to all~$w^n$. We will prove that this imposes serious restrictions on how the element of~$G$ represented by~$w$ can act on~$X$.

\begin{lemma}\label{L:NonPaObvious}
Let~$w$ be a rigid word in normal form. Then $w$ cannot act parabolically; moreover, if it acts elliptically, then it does not contain any subword whose action moves the base point~$P$ by more than~$5R$. (Here $R$ is the constant appearing in the geodesic words hypothesis.) 
\end{lemma}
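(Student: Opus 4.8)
The plan is to handle both assertions by feeding the words $w^{2n}$ into the geodesic words hypothesis; these are genuine normal forms precisely because $w$ is rigid. Write $k$ for the length of $w$, so that $w^n.P$ is the orbit point sitting at position $nk$ of the normal form word $w^{2n}$. Throughout I use that each subword acts by an isometry, so that a subword $u=x_{i+1}\cdots x_j$ of $w$ satisfies $d_X(P,u.P)=d_X(P_i,P_j)$, where $P_m=(x_1\cdots x_m).P$.

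For the parabolic alternative I would argue by contradiction. Applying the geodesic words hypothesis to $w^{2n}$ with indices $0$ and $2nk$ places the point $w^n.P$ in the $R$-neighbourhood of every geodesic joining $P$ to $w^{2n}.P$. But the discussion of parabolic isometries recalled above says precisely that, for a parabolic $w$, the point $w^n.P$ drifts arbitrarily far from every such geodesic as $n\to\infty$. For $n$ large this contradicts the uniform bound $R$, ruling out parabolic behaviour.

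For the elliptic case the real work is to replace the qualitative boundedness of the orbit by a quantitative bound depending only on $R$. Set $f(n)=d_X(P,w^n.P)$ and $D=\sup_n f(n)<\infty$. Fixing $n$ and a geodesic $\gamma$ from $P$ to $w^{2n}.P$ of length $L=f(2n)$, I would exploit that $w^n$ is an isometry carrying $P\mapsto w^n.P\mapsto w^{2n}.P$, so that the midpoint $w^n.P$ is \emph{equidistant} from the two endpoints, at distance $f(n)$ from each. Since the geodesic words hypothesis places $w^n.P$ within $R$ of some $\gamma(t)$, comparing its distances to $P$ and to $w^{2n}.P$ gives $|f(n)-t|\le R$ and $|f(n)-(L-t)|\le R$, hence $|2f(n)-f(2n)|\le 2R$. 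Feeding $f(2n)\le D$ into $f(2n)\ge 2f(n)-2R$ yields $f(n)\le D/2+R$ for every $n$, and therefore $D\le 2R$.

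With this in hand the subword bound is a one-line triangle inequality. For any subword $u=x_{i+1}\cdots x_j$ of $w$, both $P_i$ and $P_j$ are orbit points of $w^{2n}$, hence each lies within $R$ of the geodesic $\gamma$, whose length is now known to satisfy $L\le 2R$. Projecting $P_i$ and $P_j$ to points $\gamma(s_i),\gamma(s_j)$ and using the triangle inequality bounds $d_X(P,u.P)=d_X(P_i,P_j)$ by $R+|s_i-s_j|+R\le 2R+L\le 4R$, which is within the claimed $5R$. The single non-routine ingredient is the bound $D\le 2R$: ellipticity by itself carries no metric information, and all the leverage comes from combining the equidistance of the ``midpoint'' $w^n.P$ with the geodesic words hypothesis applied to $w^{2n}$. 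Once the orbit of the powers is confined to a ball of radius $2R$ about $P$, controlling the finer orbit $\{P_m\}$, and hence every subword displacement, is pure bookkeeping with the triangle inequality.
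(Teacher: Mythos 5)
Your proposal is correct, and two of its three parts (the parabolic exclusion and the final subword estimate via the triangle inequality) follow essentially the same route as the paper. The interesting difference is in the quantitative elliptic bound. The paper's Claim~A fixes $D=d_X(P,w.P)$, assumes $D=3R+\epsilon$, and runs a pigeonhole argument: it projects the $N$ orbit points $w^i.P$ onto a geodesic from $P$ to $w^N.P$ of length at most the orbit diameter $\rho$, finds two projections at distance less than $\rho/N<\epsilon$, and derives a contradiction with the geodesic words hypothesis applied to the short geodesic from $w^i.P$ to $w^j.P$; this yields $d_X(P,w.P)\leqslant 3R$ and hence the $R+3R+R=5R$ subword bound. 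You instead exploit that $w^n.P$ is a metric midpoint of $P$ and $w^{2n}.P$ and lies within $R$ of a geodesic between them, which gives the near-superadditivity $f(2n)\geqslant 2f(n)-2R$ for $f(n)=d_X(P,w^n.P)$; combined with $\sup_n f(n)=D<\infty$ this forces $D\leqslant 2R$, uniformly over all powers, and your subword bound comes out as $4R\leqslant 5R$. Your argument is shorter, avoids the $\epsilon$/pigeonhole bookkeeping, and yields a slightly sharper constant (it would also sharpen the $3R$ in the elliptic clause of Proposition~\ref{P:rigid} to $2R$); the paper's argument uses only a single geodesic from $P$ to $w^N.P$ plus short geodesics between orbit points, so the two are genuinely different instantiations of the same hypothesis. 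Both are valid proofs of the stated lemma.
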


\begin{proof}
The rigid element represented by $w$ cannot act \emph{parabolically} on~$X$, because if it did, then the infinite sequence of points $P$, $w.P$, $ww.P$, $www.P, \ldots$ would not lie uniformly close to a geodesic ray, as required by the geodesic words hypothesis. 

Next, we look at the case where the rigid element $w$ acts \emph{elliptically}. We denote $D=d_X(P,w.P)$. Since $w$ acts by isometries, we have $d_X(w^k.P,w^{k+1}.P)=D$ for all integers~$k$. 

{\bf Claim A } We claim that $D\leqslant 3R$. 

In order to prove Claim A, we suppose, for a contradiction, that $D=3R+\epsilon$, for some positive integer~$\epsilon$. Since $w$ acts elliptically, the orbit $P$, $w.P$, $ww.P$, $www.P, \ldots$ is bounded -- let $\rho$ be its diameter. Now we choose an integer~$N$ so large that $\frac{\rho}{N}<\epsilon$. 
By the geodesic words hypothesis, the geodesic from~$P$ to~$w^N.P$ contains all points $w^i.P$, for $0\leqslant i\leqslant N$, in its $R$-neighbourhood. In particular, for every point $w^i.P$ there is a point $v_i$ on this geodesic with $d_X(w^i.P, v_i)\leqslant R$. We now have the~$N$ points $\nu_1,\ldots,\nu_N$ on a geodesic of length at most~$\rho$, so there is a pair of indices $(i,j)$ with $0\leqslant i<j\leqslant N$ such that $d_X(v_i, v_j)\leqslant \frac{\rho}{N}<\epsilon$. This implies that $d_X(w^i.P, w^j.P)<2R+\epsilon$. Hence the point $w^{i+1}.P$, which lies at distance exactly $3R+\epsilon$ from $w^i.P$, cannot lie in the $R$-neighbourhood of a geodesic segment from $w^i.P$ to $w^j.P$. This contradicts the geodesic words hypothesis, and the claim is proven.

We have just seen that a rigid element~$w$ which acts elliptically cannot move the base point very far: $d_X(P,w.P)\leqslant 3R$. In fact, neither can any subword of~$w$: 

{\bf Claim B } If $w=x_1x_2x_3\ldots$ is rigid, we claim that any subword $x_i x_{i+1}\ldots x_{j-1} x_j$ satisfies
$$
d_X(P, x_i x_{i+1}\ldots x_{j-1} x_j.P)\leqslant 5R
$$

In order to prove this claim, we observe that $x_1\ldots x_{i-1}.P$ and $x_1\ldots x_j.P$ each lie $R$-close to some point of a geodesic from~$P$ to~$w.P$. Since this geodesic is only of length at most $3R$, we obtain
$$
d_X(x_1\ldots x_{i-1}.P, x_1\ldots x_j.P)\leqslant R+3R+R=5R
$$
Since $x_1\ldots x_{i-1}$ acts by an isometry on~$X$, we deduce that $d_X(P, x_i\ldots x_j.P)\leqslant 5R$, also. The proof of Claim B and of Lemma~\ref{L:NonPaObvious} is complete.
\end{proof}

We observe that words whose action displaces the base point by more than $5R$ do actually exist (by the hypothesis that there are \emph{loops} in the accessible sub-automaton which act loxodromically). Let $w_{\rm far}$ be one such word.

Now we recall that we have a distinguished rigid word $w_{\rm rigid}$ recognized by the automaton, whose length we denote~$k$ and which is read in the automaton along a path which ends in an accessible accept state which we denote~$E$. 
As seen in the proof of Lemma~\ref{L:RigidNotRare}, there is a non-negligible proportion of elements of length~$l$ which are recognized by the automaton and which are of the form $w_{\rm rigid}w_\circlearrowleft$, where the word~$w_\circlearrowleft$ is read along a loop of length $l-k$ based at the state~$E$. In particular, these words $w_{\rm rigid}w_\circlearrowleft$ are rigid. Now, we have seen in Lemma~\ref{L:NonPaObvious} that these words are guaranteed to act loxodromically if they contain a subword $w_{\rm far}$ which moves the base point by more than $5R$. 

Among the words of length~$l-k$ which can be read along loops based at~$E$ in the accessible sub-automaton,  the proportion of those that do not contain the subword~$w_{\rm far}$ tends to~$0$ as $l$ tends to infinity. For a proof of this statement, see Lemma 3.5(iii) of~\cite{CarusoGeneric1}. (In fact, this convergence is exponentially fast.) 

In summary, the proportion of rigid, loxodromically acting words among all recognized words of length equal to~$l$ is bounded away from~$0$. The very last step is to deduce that the same holds among all recognized words of length \emph{less than or} equal to~$l$. The proof of Theorem~\ref{T:main} is now complete.
\end{proof}

\begin{remark}\label{R:SphereBall}
For later reference, we point out that we started by proving the analogue of Theorem~\ref{T:main} for the \emph{sphere} consisting of elements whose normal form is of length equal to~$l$, and deduced the result for the \emph{balls} $B(l)$ from that.
\end{remark}

\begin{proof}[Proof of Proposition~\ref{P:rigid}]
All the work for this proof has already be done in the proof of Theorem~\ref{T:main}. Lemma~\ref{L:NonPaObvious} tells us that~$g$ cannot act parabolically, and elliptically acting elements cannot move the base point by more than~$5R$. Claim~A gives the stronger bound of~$3R$. For the loxodromic case, the geodesic words hypothesis implies that, for all positive integers~$k$, the point $g^k.P$ lies $R$-close to any geodesic from $P$ to~$g^{2k}.P$.%
\marg{Is it clear that this implies that $P$ is $R$-close to the axis?} 
Thus $P$ lies $R$-close to any geodesic from $g^{-k}.P$ to~$g^k.P$. 
\end{proof}


\section{Examples}

In this section we study four examples: the group $PSL(2,\mathbb Z)$ acting on the Farey graph, braid groups acting on the additional length complex, relatively hyperbolic groups acting on their associated $\delta$-hyperbolic space, and of  fundamental groups of compact, locally CAT(0) cubical complexes acting on the contact graph.


\subsection{$\mathrm{PSL}(2,\mathbb Z)$ acting on the Farey graph}

As a warm-up exercise, we will look at the group $\mathrm{PSL}(2,\mathbb Z)$, acting on the Farey graph. One motivation for this study is that $\mathrm{SL}(2,\mathbb Z)$ is the mapping class group of the once-punctured torus, which acts on its curve complex, the Farey graph -- see \cite[Section 1.5]{MM2}. Saying that a matrix $M$ in $\mathrm{SL}(2,\mathbb Z)$ acts loxodromically is equivalent to saying that~$M$, seen as a homeomorphism of the once-punctured torus, is of pseudo-Anosov type.

In this framework, Theorem~\ref{T:main} does not contain any new information, since the full genericity (in the sense of this paper) of pseudo-Anosov elements has been known for a long time. A proof of this fact is in~\cite[Theorem 18]{Rivin}, and the closely related case of the 3-strand braid group is treated very explicitly in~\cite{AK}. However, our aim here is to illustrate the proof of Theorem~\ref{T:main}

The group $\mathrm{PSL}(2,\mathbb Z)$ has a presentation
$$\langle \ A,B \ | \ ABA=BAB, (ABA)^2=1\ \rangle, \text{ \ where \ } A=\begin{pmatrix}
1 & 1 \\
0 & 1 \\
\end{pmatrix}, \ \ \ 
B=\begin{pmatrix}
1 & 0 \\
-1 & 1 \\
\end{pmatrix}.$$
We shall outline the proof of the following special case of Theorem~\ref{T:main}:

\begin{proposition}
Let $B(l)\subset \mathrm{PSL}(2,\mathbb Z)$ be the vertices of the ball of radius~$l$ with center~$1$ in the Cayley graph of $\mathrm{PSL}(2,\mathbb Z)$ with generators $A^{\pm 1}$, $B^{\pm 1}$.  Among the elements of~$B(l)$,  we look at the proportion of those which act loxodromically on the Farey graph, and whose axis in the Farey graph passes at distance~1 from the point~$P=0=\frac{0}{1}$. Then the $\liminf_{l\to\infty}$ of this proportion is strictly positive. 
\end{proposition}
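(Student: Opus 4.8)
The plan is to instantiate the general machinery of Theorem~\ref{T:main} for the specific action of $\mathrm{PSL}(2,\mathbb Z)$ on the Farey graph. Since this is billed as an illustration of the proof technique, the real work is not in re-running the abstract argument but in verifying that the three hypotheses of Theorem~\ref{T:main} — the automatic normal form hypothesis, the geodesic words hypothesis, and the existence of interesting loxodromic actions — all hold in this concrete setting, and in identifying the base point and the relevant constant $R$.

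\textbf{Step 1: the normal form and its automaton.} First I would exhibit a deterministic FSA recognizing a normal form for $\mathrm{PSL}(2,\mathbb Z)$. Using the presentation given, with generators $A^{\pm 1}, B^{\pm 1}$, one has the structure $\mathrm{PSL}(2,\mathbb Z)\cong \Z/2 * \Z/3$; a clean normal form comes from writing elements as alternating words in the order-2 and order-3 generators. I would set up the alphabet $\mathcal S$ and describe the automaton whose accept states encode the no-backtracking / alternation condition, so that the language of normal forms is exactly the geodesic-ish representatives. The key point to check here is that this FSA is \emph{dominated by its accessible sub-automaton} and contains a word $w_{\rm rigid}$ with $w_{\rm rigid}$ and $w_{\rm rigid}^2$ ending in a common accessible accept state. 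For a free product like this, the automaton is essentially recurrent on the relevant sub-automaton, so I would invoke the criterion of Example~\ref{E:RecurrentANFH}: it suffices to exhibit one accept state $V$ and one letter $g\in\mathcal S$ whose $g$-edges from $V$ and from the start state land in a common non-fail state.

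\textbf{Step 2: the geodesic words hypothesis.} This is where I expect the main obstacle to lie. I need a constant $R$ such that the shadow $P, x_1.P, x_1x_2.P,\ldots$ of every normal form word stays within the $R$-neighbourhood of every geodesic between its endpoints in the Farey graph. The natural route is to use the tree-like (indeed, genuinely tree-based, via the Bass--Serre tree of the free product) structure: alternating normal-form words trace out geodesics or uniform quasi-geodesics in the tree, and the standard $\mathrm{PSL}(2,\mathbb Z)$-equivariant comparison between the Bass--Serre tree and the Farey graph should transport this to a uniform quasi-geodesic shadowing property, which by Remark~(a) is equivalent to the geodesic words hypothesis. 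The delicate bookkeeping is controlling how the diagonal/edge-path excursions of successive generators $A,B$ move the base point $P=\tfrac{0}{1}=0$, and confirming the bound is uniform across all normal form words rather than word-dependent. The hyperbolicity ($\delta$-thinness) of the Farey graph is classical, so once shadowing is established the neighbourhood constant $R$ follows from the Morse lemma for quasi-geodesics.

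\textbf{Step 3: interesting loxodromic actions and conclusion.} Finally I would exhibit an explicit loop in the accessible sub-automaton whose associated element acts loxodromically on the Farey graph — equivalently, a hyperbolic matrix (trace $>2$ in absolute value), e.g.\ a product such as $AB^{-1}$ or $A^2B^{-1}$, which as a mapping class of the once-punctured torus is pseudo-Anosov. Since such an element can be taken rigid (its normal form powers stay accepted), the hypothesis ``there are interesting loxodromic actions'' holds. With all three hypotheses verified, Theorem~\ref{T:main} applies verbatim and yields that the $\liminf_{l\to\infty}$ of the proportion of elements of $B(l)$ that are rigid and act loxodromically is strictly positive. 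The axis statement — that the axis passes at distance~$1$ from $P=0$ — then follows from the loxodromic bullet of Proposition~\ref{P:rigid}, which gives distance at most $R$ from the axis; here the combinatorics of the Farey graph, where $P=0$ is adjacent to its neighbours, lets me sharpen the generic constant $R$ to the explicit value~$1$ for these particular rigid elements, completing the proof.
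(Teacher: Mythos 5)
Your overall strategy --- verify the automatic normal form hypothesis, the geodesic words hypothesis, and the existence of interesting loxodromic actions, then quote Theorem~\ref{T:main} and the loxodromic bullet of Proposition~\ref{P:rigid} --- is exactly the strategy of the paper, and Steps 2 and 3 are in the right spirit (the paper also uses $P=0$, $R=1$, and the loxodromic element $A\overline B$, checking that $(A\overline B)^6$ displaces $P$ by more than $5R$). However, there is a genuine gap in your Step 1. You propose to build the automaton over an alphabet coming from the splitting $\mathrm{PSL}(2,\Z)\cong \Z/2 * \Z/3$, i.e.\ alternating words in the order-$2$ and order-$3$ torsion generators. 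But the proposition is about the ball $B(l)$ in the Cayley graph with respect to $A^{\pm1},B^{\pm1}$, while Theorem~\ref{T:main} counts elements by the \emph{length of their normal form}. These two counts only agree if the normal form words are geodesics in the $\{A^{\pm1},B^{\pm1}\}$ Cayley graph; with the free-product alphabet they are not even words in the same generating set, and genericity statements of this kind are notoriously \emph{not} invariant under a change of generating set (the proportion of a subset inside balls can change drastically). So your plan, as written, proves a statement about a different family of balls, not the one claimed.

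The paper sidesteps this by choosing the normal form directly over the alphabet $\{A^{\pm1},B^{\pm1}\}$: each element has a unique representative avoiding the eight subwords $ABA$, $\overline A\overline B\overline A$, $BAB$, $\overline B\overline A\overline B$, $AB\overline A$, $BA\overline B$, $\overline A\overline B A$, $\overline B\overline A B$, and these representatives are geodesics in that Cayley graph, so normal-form length equals word length and the two notions of $B(l)$ coincide. It then takes $w_{\rm rigid}=B$ and even computes the Perron--Frobenius eigenvalue $\lambda=\sqrt2+1$ of the $8\times 8$ accessible sub-automaton to get an explicit lower bound $\tfrac{1}{16\sqrt2}$ on the proportion of rigid words. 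To repair your argument you would either need to adopt such a geodesic normal form over $A^{\pm1},B^{\pm1}$, or add a separate (and nontrivial) comparison between proportions in balls for the two generating sets. A minor additional point: with $R=1$ the axis statement follows immediately from Proposition~\ref{P:rigid}; no extra Farey-graph combinatorics is needed to ``sharpen'' the constant.
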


\begin{proof}
We have to check that the hypotheses of Theorem~\ref{T:main} are satisfied with~$R=1$ and with geodesic normal form words.
One can check that any element of $\mathrm{PSL}(2,\mathbb Z)$ except $ABA$ and $\overline A\overline B\overline A$ has a unique representative word not containing the subwords $ABA$, $\overline A\overline B\overline A$, $BAB$, $\overline B\overline A \overline B$, $AB\overline A$, $BA\overline B$, $\overline A\overline B A$ and $\overline B\overline A B$. The automaton describing this language is shown in Figure~\ref{F:automaton}. Note that the words in this normal form are in fact geodesics in the Cayley graph.


\begin{figure}[htb] 
\centering 
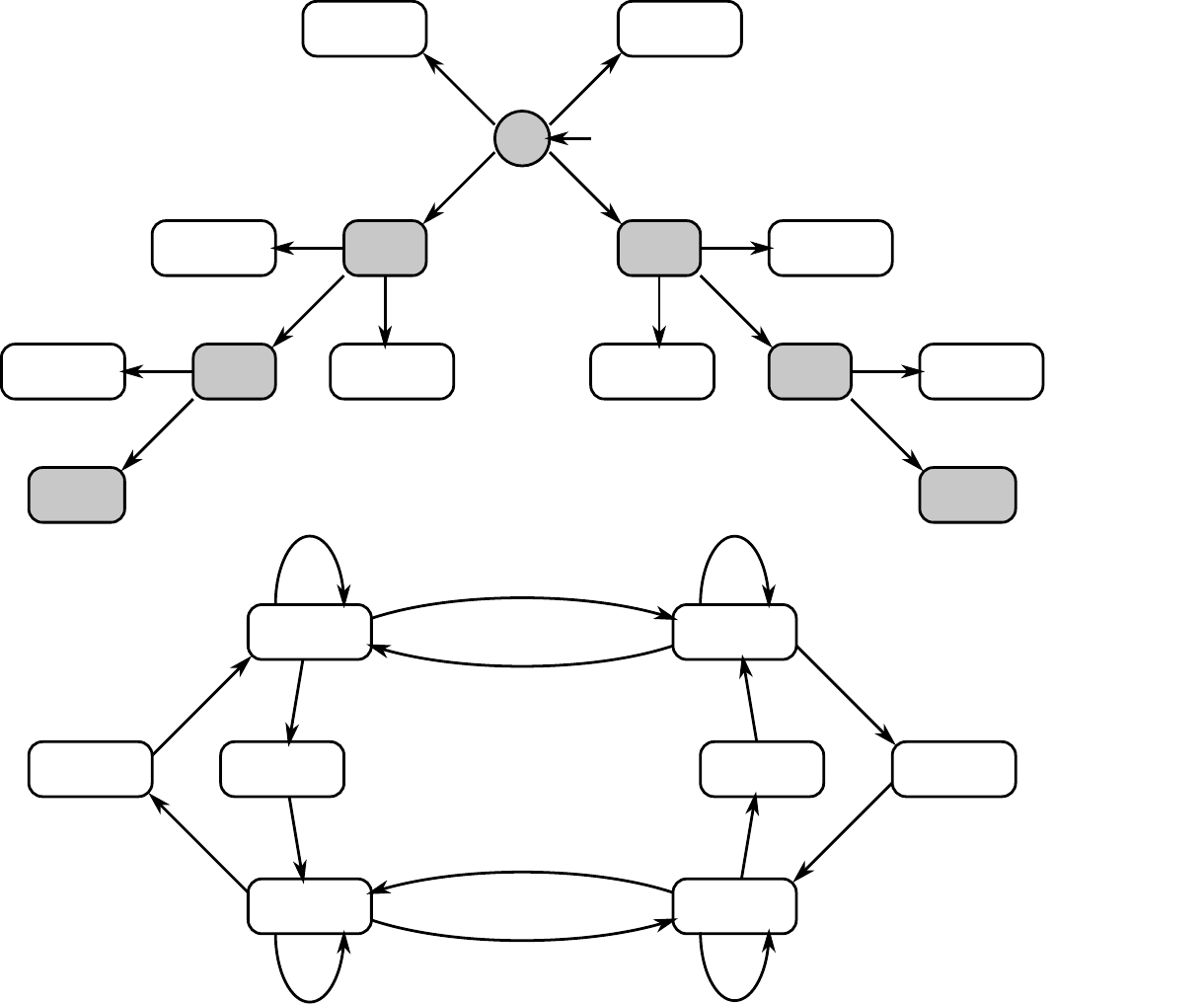
\caption{A finite state automaton for $\mathrm{PSL}(2,\mathbb Z)$. All states (except the start state) are accept states. The states are named after the last two letters read. The symbol $X_{\neg A}$ means ``any letter but $A$, or no letter''. The bottom part of the picture shows the accessible sub-automaton. The top part shows (shaded grey) the non-accessible states, and how they are connected to the accessible states. Not shown here is the fail state and the arrows leading to it.}
\label{F:automaton}\end{figure}


One can then write down the adjacency matrix~$M$ of the accessible sub-automaton -- this is an $8\times 8$ matrix. According to Perron-Frobenius theory, there is a unique eigenvalue $\lambda$ of $M$ of maximal modulus. This~$\lambda$ is then real and has a one-dimensional eigenspace. In the present  case, it turns out that $\lambda=\sqrt{2}+1$ (the ``silver ratio'').


We see that several words can play the role of the word $w_{\rm rigid}$, we choose $w_{\rm rigid}=B$. (Another possible choice would have been $w_{\rm rigid}=A\overline B$.)

In Lemma~\ref{L:RigidNotRare} we saw that the proportion of rigid words among all words of length~$l$ is bounded away from zero. In order to prove this, we showed that even the subset consisting of the words that start with the subword $w_{\rm rigid}$ and trace a path in the automaton that terminates in the same state as~$w_{\rm rigid}$ has non-vanishing frequency. For the present example, one can explicitly calculate this proportion, using the adjacency matrix~$M$. One finds that  among all elements of length~$l$, the proportion of words starting with the letter~$B$ is~$\frac{1}{4}$, and among those, the proportion of words that end in the state $X_{\neg A} B$ tends to $\frac{\lambda}{4\lambda+4}=\frac{1}{4\sqrt{2}}$. This yields a (not at all sharp) lower bound of $\frac{1}{16\sqrt{2}}$ on the limit of the proportion of rigid words.

In order to study the proportion of loxodromically acting words among the rigid ones, we equip the Farey graph with the base point~$P=0=\frac{0}{1}$. Then the geodesic words hypothesis is satisfied with $R=1$, i.e.\ the orbit of~$P$ under the action of a normal form word traces out a path which stays at distance~1 from any geodesic connecting its end-points.  
Now by Claim~B, the normal form of any rigid element which does not act loxodromically cannot contain any subword that moves the base point by more than five. However, interesting loxodromic actions exist -- for instance given by the word $A\overline B$ -- and the action of $(A\overline B)^6$ moves the base point by a distance of 6. The proportion, among rigid (or indeed any) words of length~$l$, of words which do not contain the subword $(A\overline B)^6$ tends to zero as $l$~tends to infinity. 
\end{proof}


\subsection{Word-hyperbolic groups acting on their Cayley graphs}

Let~$G$ be an infinite word-hyperbolic group equipped with some fixed, finite generating set~$\mathcal S$. Let $X$ be the Cayley graph of~$G$ with respect to~$\mathcal S$; it is $\delta$-hyperbolic. As the base point of~$X$ we take~$1_G$, the neutral element of~$G$. The group acts on~$X$, on the left, by isometries. Elements of finite order act elliptically, and elements of infinite order act loxodromically~\cite[Chapter~8]{GhysDeLaHarpe}.

We wish to prove that a positive proportion of elements, in the sense of Theorem~\ref{T:main}, act loxodromically. Unfortunately, it seems hard to apply Theorem~\ref{T:main} directly, as the automatic normal form hypothesis does not hold in sufficient generality. However, in the framework of hyperbolic groups we have other techniques at our disposal which will allow us, in Proposition~\ref{P:MainHyperbolic}, to prove an analogue of Theorem~\ref{T:main}, under a very mild technical condition. 

We consider the language $\mathcal L$ of words with letters in~$\mathcal S^{\pm 1}$ such that every subword of length $10\delta$ is a geodesic in~$X$.  
In other words, the language $\mathcal L$ is defined by the fact that a finite number of subwords, namely the non-geodesics of length at most $10\delta$, are banned.

The technical hypothesis on the pair $(G,\mathcal S)$ that we need is that for any pair of words $(w_1,w_3)$ in $(\mathcal L, \mathcal L)$, there exists a word $w_2$ in~$\mathcal L$ such that the composite word $w_1w_2w_3$ also belongs to~$\mathcal L$. 
(We conjecture that this hypothesis is satisfied for any hyperbolic group with any set of generators.)

\begin{proposition}\label{P:MainHyperbolic}
Suppose $G$ is a group, equipped with a finite generating set~$\mathcal S$, whose Cayley graph~$X$ is $\delta$-hyperbolic. Suppose also that the above technical hypothesis on the pair $(G,\mathcal S)$ is satisfied. Then there exist constants $D,R$ with the following properties: for every element $g$ of~$G$ with $d_X(1_G,g) \geqslant 10\delta$, there exists an element~$\tilde g$ with $d_X(g,\tilde{g})\leqslant D$
which acts loxodromically on~$X$ (i.e.\ is torsion free), and whose loxodromic axis in~$X$ passes within distance~$R$ from the basepoint $1_G$.
\end{proposition}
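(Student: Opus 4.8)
The plan is to turn $g$ into a \emph{rigid} loop and then read off loxodromicity from the geometry of the language $\mathcal L$. The geometric input I will use is the local-to-global principle for hyperbolic spaces: since $X$ is a geodesic $\delta$-hyperbolic space, there are constants $\lambda,\epsilon$ depending only on $\delta$ such that any $10\delta$-local geodesic is a $(\lambda,\epsilon)$-quasi-geodesic, and by the stability (Morse) lemma such a quasi-geodesic stays within a Hausdorff distance $R=R(\delta)$ of any geodesic joining the same endpoints (see, e.g., \cite{CDP}). In particular the paths traced in $X$ by the words of $\mathcal L$ form a uniform family of unparametrised quasi-geodesics, so $\mathcal L$ satisfies the geodesic words hypothesis with this constant $R$. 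Consequently, if I can exhibit, close to $g$, an element $\tilde g$ represented by a word $w$ all of whose powers lie in $\mathcal L$ (a rigid word in the sense of this paper), then the orbit $\bigl(\tilde g^{\,n}.1_G\bigr)_{n\in\Z}$ will be a bi-infinite quasi-geodesic through $1_G$; hence $\tilde g$ is loxodromic, and exactly as in the loxodromic case of Proposition~\ref{P:rigid} its geodesic axis passes within $R$ of $1_G$.

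To produce such a $w$, fix a geodesic word $w_g$ representing $g$. Since $d_X(1_G,g)\geqslant10\delta$ we have $|w_g|\geqslant10\delta$, and being geodesic $w_g\in\mathcal L$. Applying the technical hypothesis to the pair $(w_g,w_g)$ gives a word $w_2\in\mathcal L$ with $w_g\,w_2\,w_g\in\mathcal L$; I set $w=w_g w_2$ and $\tilde g=[w]=g\cdot[w_2]$, so that $d_X(g,\tilde g)=d_X(1_G,[w_2])\leqslant|w_2|$. The combinatorial core is the claim that $(w_g w_2)^n\in\mathcal L$ for all $n$, and likewise for the bi-infinite periodic word. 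This is where the hypothesis $|w_g|\geqslant10\delta$ is used: since each $w_g$-block has length at least $10\delta$, any subword of length $10\delta$ of such a power can meet at most one of the $w_2$-blocks, and is therefore contained in a single occurrence of the pattern $w_g w_2 w_g$, which lies in $\mathcal L$ and is thus geodesic. Hence every $10\delta$-window is geodesic, $w$ is rigid, and the previous paragraph applies.

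The main obstacle is to bound the displacement $d_X(g,\tilde g)$, i.e.\ to choose $w_2$ of length at most a constant $D$ independent of $g$, since the technical hypothesis only asserts the \emph{existence} of a connector. The point is that the property $w_g w_2 w_g\in\mathcal L$ is local: once $w_2$ is fixed, every $10\delta$-window of $w_g w_2 w_g$ either lies inside $w_g$ or $w_2$ (already controlled by $w_g,w_2\in\mathcal L$) or crosses a junction, and --- using $|w_g|\geqslant10\delta$ --- such a window involves only the length-$10\delta$ suffix of $w_g$, the whole of $w_2$, and the length-$10\delta$ prefix of $w_g$. There are only finitely many pairs $(u_1,u_3)$ consisting of a length-$10\delta$ suffix and a length-$10\delta$ prefix of words in $\mathcal L$; for each such pair I invoke the technical hypothesis once, applied to $(u_1,u_3)$, to fix a connector $w_2(u_1,u_3)\in\mathcal L$, and I set $D=\max|w_2(u_1,u_3)|$ over this finite list. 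For an arbitrary $g$ with boundary data $(u_1,u_3)$ the choice $w_2=w_2(u_1,u_3)$ then satisfies $u_1 w_2 u_3\in\mathcal L$, which (again because $|w_g|\geqslant10\delta$) forces $w_g w_2 w_g\in\mathcal L$ with $|w_2|\leqslant D$. Finally $\tilde g$ is automatically torsion-free, since a loxodromic isometry has infinite order, and this provides the two constants $D$ and $R$ claimed in the statement.
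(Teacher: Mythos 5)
Your proof is correct and follows essentially the same route as the paper's: both take a geodesic word for $g$, apply the technical hypothesis to the length-$10\delta$ suffix/prefix pair (of which there are finitely many, giving the uniform bound $D$) to append a connector $w_2$ so that all powers of $w w_2$ lie in $\mathcal L$, and then invoke the local-to-global principle (the paper cites Canary's theorem that words in $\mathcal L$ are $(12,7\delta)$-quasigeodesics) plus stability of quasigeodesics to conclude loxodromicity and the bound $R$ on the distance from the axis to $1_G$. Your windowing argument for why the powers stay in $\mathcal L$ is a welcome elaboration of a step the paper only asserts.
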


\begin{proof}
For every pair of words $(w_1,w_3)$, both of length $10\delta$ and belonging to~$\mathcal L$, we choose a word $w_2$ as in the technical hypothesis, where this $w_2$ should be chosen as short as possible. Let $D$ be the length of the longest word $w_2$ thus obtained.

Now for any element $g$ of~$G$ at distance at least $10\delta$ from the identity element. Let $w$ be a geodesic representative of~$g$, let $w_1$ consist of the last $10\delta$ letters of $w$, and let $w_3$ consist of the first $10\delta$ letters. 
With $w_2$ the word promised by the technical hypothesis, let $\tilde g$ be the element represented by the word $w w_2$. We observe that $d_X(g,\tilde g)\leqslant D$, and that any power of the word $w w_2$ representing $\tilde g$ belongs to the language~$\mathcal L$. 

Now we recall a classical fact about hyperbolic group due to Canary -- see \cite[Theorem 3.11]{ABCFLMSS}:  words in~$\mathcal L$ represent $(12,7\delta)$-quasigeodesics in~$X$. 

Let us denote $x(i)$, for $i\in\mathbb N$, the point of the Cayley graph~$X$ represented by the prefix of length~$i$ of the word $w w_2 w w_2 w\ldots$ Similarly, for $i<0$, we let $x(i)$ be the prefix of length~$i$ of $w_2^{-1} w^{-1} w_2^{-1} w^{-1}\ldots$. 
Then the path $(x(i))_{i\in\mathbb Z}$ is a $(12,7\delta)$-quasigeodesic in~$X$ which contains the bi-infinite sequence points $\ldots, \tilde g^{-2}, \tilde g^{-1},1,\tilde g, \tilde g^2,\ldots$, i.e.\ the orbit of the base point of~$X$ under the action of $\langle\tilde g\rangle$. 

Thus the element $\tilde g$ acts loxodromically. Moreover, the axis of this action is a geodesic connecting the same pair of points on~$\partial X$ as the quasigeodesic $(x(i))_{i\in\mathbb Z}$. 
Since in a $\delta$-hyperbolic space, any $(12,7\delta)$-quasigeodesic stays at some universally bounded distance~$R$ from any geodesic connecting the same pair of points \cite[Chapter 5]{GhysDeLaHarpe}\marg{This reference talks about finite geodesics. Reference for the infinite case?}, any axis of~$\tilde g$ passes at distance~$R$ from the base vertex~$1_G$ of~$X$.
\end{proof}



\subsection{Relatively hyperbolic groups}

The geodesic words hypothesis of Theorem~\ref{T:main} is often very difficult to verify -- for instance, we do not know if it holds in the case of braid groups (with the Garside normal form) acting on the curve complex of a punctured disk.  The aim of this short section is to point out that there is one large and interesting class of groups acting on $\delta$-hyperbolic spaces where this hypothesis holds automatically. 

Let $G$ be a group, equipped with a finite generating set~$S$, that is relatively hyperbolic, in the strong sense, with respect to the finite collection of subgroups $\mathcal H=\{H_1,\ldots,H_m\}$. We recall what this means. Firstly, the Cayley graph of~$G$ with respect to~$S$, with every translate of every~$H_i$ coned off, is required to be a $\delta$-hyperbolic space. We denote this quotient space by $Cayley(G,S\cup\mathcal H)$. Secondly, the couple $(G,\mathcal H)$ must satisfy the so-called bounded coset penetration property. (This is Farb's definition~\cite{Farb}, for a list of equivalent definitions see~\cite{GrovesManning}).

\begin{proposition}
Suppose that $G$ is a finitely generated group (generating set~$S$), which is (strongly) relatively hyperbolic, with respect to the collection~$\mathcal H$ of subgroups, so that $G$ acts on the $\delta$-hyperbolic complex $Cayley(G,S\cup\mathcal H)$. Suppose further that~$G$ satisfies the automatic normal form hypothesis, and that the automatic normal forms are \emph{geodesics} in the Cayley graph $Cayley(G,S)$. Then the geodesic words hypothesis is satisfied.
\end{proposition}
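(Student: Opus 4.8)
The plan is to identify the $\delta$-hyperbolic space~$X$ of Theorem~\ref{T:main} with the coned-off Cayley graph $Cayley(G,S\cup\mathcal H)$, with base point $P=1_G$, and then to read the geodesic words hypothesis as a statement comparing two metrics on~$G$: the word metric $d_S$ of $Cayley(G,S)$ and the coned-off metric of~$X$. Since each letter of a normal form is a generator of $S$ (up to inverse), a normal form word $x_1x_2\cdots$ of length~$n$ determines an edge-path of length~$n$ in $Cayley(G,S)$ whose consecutive vertices are exactly the partial products $x_1\cdots x_i=x_1\cdots x_i.P$; by hypothesis this path is a $d_S$-geodesic. Thus the goal reduces to showing that the vertex sequence of a $d_S$-geodesic stays uniformly close, in the coned-off metric, to any coned-off geodesic joining its endpoints.

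First I would record the key input from the theory of relatively hyperbolic groups: there is a constant, depending only on $(G,S,\mathcal H)$, such that the natural $1$-Lipschitz projection $\pi\colon Cayley(G,S)\to Cayley(G,S\cup\mathcal H)$ carries every $d_S$-geodesic to a uniform unparametrized quasi-geodesic in~$X$. This is precisely where the bounded coset penetration property enters: it controls how a word geodesic can enter, traverse, and exit the peripheral cosets, preventing it from penetrating the same coset twice in an uncontrolled way and bounding the excursions it makes, so that collapsing each coset to a point does not destroy the quasi-geodesic quality of the image. I would cite Farb~\cite{Farb} for the relevant lemmas on the interaction of word geodesics with the coned-off metric, together with the equivalent formulations collected in~\cite{GrovesManning}.

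Next, since $X$ is $\delta$-hyperbolic, I would invoke the stability of quasi-geodesics (the Morse lemma): any unparametrized $(\lambda,c)$-quasi-geodesic lies within Hausdorff distance $R=R(\delta,\lambda,c)$ of any geodesic sharing its endpoints. I take this $R$ as the constant in the geodesic words hypothesis. Finally, for a fixed normal form word and indices $0\leqslant i<j$, the finite subword $x_{i+1}\cdots x_j$ is itself a $d_S$-geodesic, since a subpath of a geodesic is a geodesic; hence its image under~$\pi$ is a uniform unparametrized quasi-geodesic joining $x_1\cdots x_i.P$ and $x_1\cdots x_j.P$, and by the Morse lemma each intermediate point $x_1\cdots x_m.P$ lies within~$R$ of every coned-off geodesic between those endpoints. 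This is exactly the geodesic words hypothesis.

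The hard part will be the input lemma of the second paragraph, namely that $d_S$-geodesics project to \emph{uniform} quasi-geodesics in the coned-off graph. The delicate configurations are a word geodesic that travels a long $d_S$-distance inside a single (possibly distorted) peripheral coset, whose image collapses to coned-off diameter at most~$2$, and one that threatens to penetrate the same coset more than once; ruling out genuine backtracking and bounding the penetration depth is exactly what the bounded coset penetration property is designed to do, and the one genuinely technical step is to assemble these local estimates into a single pair of quasi-geodesic constants valid for all normal form words simultaneously.
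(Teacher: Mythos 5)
Your proposal is correct and follows essentially the same route as the paper: the paper's entire proof is a one-line citation of Proposition~8.25 and Theorem~A.1 of Dru\c{t}u--Sapir \cite{DrutuSapir}, which together supply exactly the input you single out as the hard part, namely that $d_S$-geodesics project to uniform unparametrized quasi-geodesics in the coned-off graph, after which Morse stability gives the constant~$R$. Your expanded sketch (including the correct emphasis on \emph{unparametrized} quasi-geodesics and the observation that subpaths of geodesics are geodesics) is a faithful unpacking of what that citation hides.
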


\begin{proof} This is an immediate consequence of Proposition~8.25 and Theorem~A.1 in~\cite{DrutuSapir}. \end{proof}


\subsection{Braid groups acting on the additional length complex}

An important class of automatic groups, admitting algorithms for the word- and conjugacy problem which are very fast in practice and relatively easy to program are the so-called \emph{Garside groups}. For a gentle introduction to these groups see~\cite[Section 1]{BGGMrigid}, and for much more complete accounts see~\cite{DehornoyGarside,GarsideFoundations}.
The classical examples of such groups are Artin-Tits groups of spherical type, and in particular braid groups~\cite{BrieskornSaito,BessisDual,ElrifaiMorton,ECHLPT}. 


It was proved in~\cite{CalvezWiest} that every Garside group~$G$ acts on a $\delta$-hyperbolic space $\mathcal C_{AL}(G)$ in such a way that the shadow of words in automatic normal form on $\mathcal C_{AL}(G)$ is an un-parametrized quasi-geodesic. More precisely, it is shown that the geodesic words hypothesis is satisfied with~$R=39$.

In the case $G=B_n$, the braid group on $n$ strands equipped with its classical Garside structure, the complex~$\mathcal C_{AL}(B_n)$ is of infinite diameter. Moreover, periodic and reducible  braids act elliptically, and there exists a positive, rigid pseudo-Anosov braid which acts loxodromically, so there are interesting loxodromic actions. (It is currently unknown whether all pA braids act loxodromically, and indeed whether the additional length complex is quasi-isometric to the curve complex.)

\begin{notation}
We denote $\hat B(l)$ the ball with radius~$l$ and center~1 in the Cayley graph of~$B_n$, with  Garside's set of generators.
\end{notation}

Most of the following result was already known from S.~Caruso's paper~\cite{CarusoGeneric1}: 

\begin{proposition}\label{P:Caruso} 
Among the elements in the ball~$\hat B(l)$, the $\liminf_{l\to\infty}$ of the proportion of elements which act loxodromically on $\mathcal C_{AL}(B_n)$ (and thus are pseudo-Anosov), and whose axis in~$\mathcal C_{AL}(B_n)$ passes at distance at most 39 from the base point~$1_G$, is strictly positive. 
\end{proposition}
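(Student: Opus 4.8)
The plan is to deduce Proposition~\ref{P:Caruso} from the machinery of Theorem~\ref{T:main} applied to the positive braid monoid $B_n^+$, together with a bookkeeping argument that transfers the conclusion from positive braids to all of~$B_n$. By Example~\ref{E:B_n^+} the monoid $B_n^+$ satisfies the automatic normal form hypothesis; the geodesic words hypothesis holds with $R=39$ by~\cite{CalvezWiest}; and the positive, rigid, loxodromically acting pseudo-Anosov braid recalled above supplies the interesting loxodromic actions. Hence Theorem~\ref{T:main}, and in particular its sphere version (Remark~\ref{R:SphereBall}), applies to $B_n^+$. The difficulty is that $B_n$ is not this monoid: the Garside normal form of a braid is $\Delta^{p}s_{1}\cdots s_{k}$ with $p=\inf\in\Z$ possibly negative, and — as the count below shows — the positive braids form only a proportion $\asymp 1/l$ of the ball $\hat B(l)$. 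So the prefix $\Delta^{p}$ must be handled head-on.

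The key structural observation I would establish is that rigidity, the loxodromic/elliptic dichotomy, and the location of the axis all depend only on the residue $\epsilon=p\bmod 2$ and on the word $s_1\cdots s_k$, not on the full value of~$p$. For the dynamics this is because $\Delta^{2}$ is central; being periodic, it acts elliptically on $\mathcal C_{AL}(B_n)$ (as recalled above), so its orbit has some finite diameter $\rho_0$. Writing $g=\Delta^{2m}h$ with $h=s_1\cdots s_k$ a positive braid of $\inf 0$ and using centrality, one gets $g^{j}=\Delta^{2mj}h^{j}$ and $|d_X(P,g^{j}.P)-d_X(P,h^{j}.P)|\leqslant\rho_0$; thus $g$ and $h$ have the same translation length and the same fixed points on $\partial X$, so $g$ is loxodromic exactly when $h$ is. For rigidity this is because $\Delta$-conjugation $\tau$ satisfies $\tau^{2}=\mathrm{id}$, so the cyclic left-weightedness criterion defining rigidity~\cite{BGGMrigid} involves $\tau^{p}(s_1)=\tau^{\epsilon}(s_1)$ and hence is unchanged under $p\mapsto p+2$. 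Consequently, if $h=s_1\cdots s_k$ is rigid and loxodromic, then for every $m\in\Z$ the braid $\Delta^{2m}h$ is again rigid and loxodromic; applying Proposition~\ref{P:rigid} to $\Delta^{2m}h$ itself then yields that its axis passes within $R=39$ of~$P$, exactly as required.

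It remains to count. I would use that the word length with respect to the Garside generators is
$$\ell_{\mathcal S}(\Delta^{p}s_{1}\cdots s_{k})=\max(p+k,0)-\min(p,0),$$
so that a braid of canonical length $k$ lies in $\hat B(l)$ precisely when $k\leqslant l$ and $p$ lies in the integer interval $[-l,\,l-k]$, of cardinality $2l-k+1$. Writing $W_k$ for the number of left-weighted words of proper simple factors of length~$k$ (so $W_k\sim C\lambda^{k}$, with $\lambda$ the Perron--Frobenius eigenvalue of the accessible sub-automaton), this gives
$$\#\hat B(l)=\sum_{k=0}^{l}W_k\,(2l-k+1).$$
Let $G_k$ be the number of such words for which $s_1\cdots s_k$ is rigid and loxodromic; the sphere version of Theorem~\ref{T:main}, together with the domination of the $B_n^+$ automaton by its accessible sub-automaton, yields $\liminf_{k}G_k/W_k\geqslant c>0$. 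By the previous paragraph, for each such good word and each even $p\in[-l,l-k]$ the braid $\Delta^{p}s_1\cdots s_k$ is rigid, loxodromic, and has axis within $39$ of~$P$; these braids are pairwise distinct, so the numerator is at least $\sum_{k}G_k\cdot\#\{p\in[-l,l-k]:\ p\text{ even}\}\geqslant\sum_{k}G_k\,\bigl(\frac{1}{2}(2l-k)-1\bigr)$. The crucial point is that the two weight functions — $\frac{1}{2}(2l-k)$ for the good braids and $(2l-k+1)$ for all braids — are proportional in~$k$, so the spurious factor of $l$ coming from the range of $p$ cancels in the ratio; since the weights $W_k(2l-k)$ concentrate on $k$ within $O(1)$ of~$l$, where $G_k/W_k\geqslant c-o(1)$, this produces $\liminf_{l}\#\{\text{good}\}/\#\hat B(l)\geqslant c/2>0$.

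The main obstacle is precisely this passage from the monoid to the group, where two things have to mesh. First, the algebraic reduction showing that rigidity is insensitive to $p\bmod 2$, which rests on the identity $\tau^{2}=\mathrm{id}$ and the exact cycling criterion for rigidity. Second, the asymptotic count, where one must notice that although the positive braids alone are negligible (a $\asymp 1/l$ fraction) inside $\hat B(l)$, enlarging the good set by \emph{all} central translates $\Delta^{2m}h$ restores a linear-in-$l$ count whose weight in $k$ matches that of the full ball, so the two linear factors cancel. Everything else — the verification of the hypotheses of Theorem~\ref{T:main} for $B_n^+$ and the axis bound from Proposition~\ref{P:rigid} — is available off the shelf.
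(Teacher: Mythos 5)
Your proof is correct and follows essentially the same route as the paper: apply Theorem~\ref{T:main} in its sphere form (Remark~\ref{R:SphereBall}) to $B_n^+$, then transfer to $\hat B(l)$ by writing each braid as a power of $\Delta$ times a positive part and using that the central, elliptically acting element $\Delta^2$ changes neither rigidity (since $\tau^2=\mathrm{id}$) nor the loxodromic character or axis of the action, so that the linear-in-$l$ factors coming from the range of the $\Delta$-exponent cancel in the ratio. Your write-up is more explicit than the paper's (which simply parametrizes $\hat B(l)$ as $\Delta^{-k}\cdot S(l)$ with $0\leqslant k\leqslant l$ and concludes that the proportion drops by at most a factor of two); the one cosmetic point is that Proposition~\ref{P:rigid} is stated for elements of the monoid $B_n^+$, so for $m<0$ you should apply it to $h$ and carry the axis bound over to $\Delta^{2m}h$ using that the two elements have the same endpoints on $\partial X$ (indeed the same action on $\mathcal C_{AL}(B_n)$), rather than invoking it for $\Delta^{2m}h$ directly.
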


\begin{proof}
Let us first restrict our attention to the positive braid monoid~$B_n^+$. As seen in Example~\ref{E:B_n^+}, the automatic normal form hypothesis is satisfied in~$B_n^+$. 
Also, we saw in the preceding discussion that the other hypotheses of Theorem~\ref{T:main} are satisfied. Applying Theorem~\ref{T:main} and Remark~\ref{R:SphereBall}, we obtain the desired result (Proposition~\ref{P:Caruso}), with one exception: we do not obtain a statement about the balls $\hat B(l)$ of radius~$l$, but instead about the ``sphere segment'' of \emph{positive} braids whose Garside normal form is of length \emph{exactly} equal to~$l$. Let us denote this sphere segment by~$S(l)$.

In order to deduce the analogue result for $\hat B(l)$, we recall first that in~$B_n^+$, like in all Garside monoids, words in Garside normal form represent geodesics in the Cayley graph~\cite{CharneyMeier}. More generally, the ball~$\hat B(l)$ consists precisely of braids 
which can be written as a product $\Delta^{-k}\cdot x$, where $k\in \{0,1,\ldots,l\}$ and $x\in S(l)$; moreover, this writing, when it exists, is unique. Now for every braid $x$, the braids $\Delta^{-2} x, \Delta^{-4} x,\ldots$ all have the same Nielsen-Thurston type, and the same action on~$\mathcal C_{AL}(B_n)$, as~$x$. 
Thus if we compare the proportions, in~$\hat B(l)$ and in~$S(l)$, of elements which act loxodromically and with axis passing close to the base point, we see that the proportion in~$\hat B(l)$ is equal to at least half the proportion in~$S(l)$. 
\end{proof}

Actually, we can also give a new proof of the main result from~\cite{CarusoWiestGeneric2}. It is a small, but significant variation of the original proof. The statement of the result is as follows:

\begin{proposition}\label{P:CarusoW}
Among the elements in the ball $\hat B(l)$ 
the proportion of elements which act loxodromically on $\mathcal C_{AL}(B_n)$ tends to 1 exponentially quickly as $l\to\infty$. 
\end{proposition}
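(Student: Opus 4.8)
The plan is to realise the ``blocking element'' strategy sketched in Open Problem~3. In contrast with the proof of Theorem~\ref{T:main}, which could only control \emph{rigid} elements, the idea is to exhibit a single word~$\beta$ --- a \emph{blocking braid} --- whose occurrence as a subword of a normal form already forces loxodromic behaviour on $\mathcal C_{AL}(B_n)$, irrespective of the surrounding letters. I would import $\beta$ from \cite{CarusoWiestGeneric2}: there is a positive, rigid, pseudo-Anosov braid which, after replacing it by a sufficiently high power, displaces the base point $P=1_G$ of $\mathcal C_{AL}(B_n)$ by more than~$100R$ (with $R=39$) and, crucially, enjoys the following combinatorial \emph{barrier property}: for arbitrary braids $a,b$, the left normal form of $a\,\beta\,b$, and more importantly that of every power $(a\,\beta\,b)^m$, contains one undisturbed copy of the block~$\beta$ per occurrence, with consecutive copies separated by a uniformly bounded number of simple factors. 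It is essential that I use only this normal-form barrier property of~$\beta$ and \emph{not} its pseudo-Anosov conclusion: as noted above, it is open whether pseudo-Anosov braids always act loxodromically on $\mathcal C_{AL}(B_n)$, so loxodromicity must be established directly. This is exactly the ``small but significant variation'' on the argument of \cite{CarusoWiestGeneric2}.

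Granting~$\beta$, the first and hardest task is to show that any $g=u\,\beta\,v$ acts loxodromically on $\mathcal C_{AL}(B_n)$. Since $g$ need not be rigid I cannot feed a single convenient word into the geodesic words hypothesis; instead I apply the hypothesis to the genuine normal form $W_m$ of each power~$g^m$, which by the barrier property is an honest normal form word carrying $m$ well-separated copies of~$\beta$. Writing $(Q_i,Q_i')$ for the shadow points immediately before and after the $i$-th copy, the isometry property gives $d_{\mathcal C_{AL}}(Q_i,Q_i')=d_{\mathcal C_{AL}}(P,\beta.P)>100R$. By the geodesic words hypothesis the entire shadow of $W_m$ stays within~$R$ of any geodesic $\gamma=[P,g^m.P]$; equivalently, by Remark~(a), it is a \emph{uniform} unparametrised quasi-geodesic, so its nearest-point projection to~$\gamma$ is coarsely monotone with backtracking bounded by a constant independent of~$m$. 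Because each copy of~$\beta$ advances this projection by at least $d_{\mathcal C_{AL}}(P,\beta.P)-2R>98R$, which I have arranged (by taking a high enough power) to exceed the backtracking permitted over the bounded gaps between copies, the projection makes a definite net forward step per copy; summing over the $m$ copies yields $d_{\mathcal C_{AL}}(P,g^m.P)\geqslant \kappa\,m$ for some $\kappa>0$. Hence $\{g^m.P\}$ is a quasi-geodesic, $g$ acts loxodromically, and (as in the loxodromic case of Proposition~\ref{P:rigid}) its axis passes within $R=39$ of~$P$. Converting the purely combinatorial barrier property into this linear lower bound --- checking that the copies of~$\beta$ contribute monotone progress in $\mathcal C_{AL}(B_n)$ and survive the passage from $g^m$ to $g^{m+1}$ --- is where the real work lies, and is the step I expect to be the main obstacle.

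It then remains to count. Working first in the positive monoid $B_n^+$ with the automaton of Example~\ref{E:B_n^+}, I would invoke Perron--Frobenius theory in the form of Lemma~3.5(iii) of \cite{CarusoGeneric1} to conclude that the proportion, among normal form words of length~$l$, of those that avoid~$\beta$ as a block of consecutive factors decays \emph{exponentially} in~$l$. Every word that does contain~$\beta$ is of the form $u\beta v$ and hence, by the previous paragraph, acts loxodromically; therefore the proportion of loxodromically acting elements among the positive braids of normal form length exactly~$l$ tends to~$1$ exponentially fast. This is the statement on the sphere segment $S(l)$, in the spirit of Remark~\ref{R:SphereBall}.

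Finally I would transfer this from $S(l)$ to the ball $\hat B(l)$ as in the proof of Proposition~\ref{P:Caruso}, using that $\hat B(l)$ is the disjoint union, over $k\in\{0,\ldots,l\}$, of the sets $\Delta^{-k}\,S(l)$. The one improvement needed over Proposition~\ref{P:Caruso} is that, rather than keeping only even~$k$ (which merely produced the factor~$\frac{1}{2}$ and a positive liminf), I would exploit that the barrier property of~$\beta$ is insensitive to the left factor $\Delta^{-k}$: the element $\Delta^{-k}u\,\beta\,v$ still displays an undisturbed copy of~$\beta$ in its normal form, so the loxodromicity argument of the second paragraph applies verbatim for \emph{every}~$k$. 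Thus within each slice $\Delta^{-k}S(l)$ the proportion of loxodromic elements is at least the proportion of $x\in S(l)$ containing~$\beta$, which tends to~$1$ exponentially; since this holds uniformly in~$k$, the proportion of loxodromic elements in the whole ball $\hat B(l)$ tends to~$1$ exponentially quickly, as required.
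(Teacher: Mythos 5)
Your overall architecture (find a subword whose presence forces loxodromicity, show by Lemma~3.5(iii) of \cite{CarusoGeneric1} that avoiding it is exponentially rare, then pass from the sphere to the ball) has the right shape, and you are right that one must use the displacement of the base point of $\mathcal C_{AL}(B_n)$ rather than the pseudo-Anosov property of $\beta$. But the load-bearing step of your argument --- the ``barrier property'' asserting that for \emph{arbitrary} braids $a,b$ the left normal form of \emph{every power} $(a\beta b)^m$ contains $m$ undisturbed, well-separated copies of $\beta$ --- is not something you can import from \cite{CarusoWiestGeneric2}, and it is essentially the hard problem in disguise. The blocking braids of that paper control the normal form of a single product and the behaviour of normal forms under \emph{conjugation} (that is what ``non-intrusive conjugation'' means); they say nothing about the normal forms of powers of a non-rigid element, which is exactly the situation that the rigidity machinery of \cite{BGGMrigid} was invented to circumvent. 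Without that property you cannot feed the normal form of $g^m$ into the geodesic words hypothesis, and your projection argument never gets off the ground. Your last paragraph has a related soft spot: for odd $k$ the element $\Delta^{-k}u\beta v$ is not $u\beta v$ times a central element, and its powers involve the flip automorphism $\tau=\Delta^{-1}\cdot\Delta$, so ``applies verbatim for every $k$'' needs justification --- the proof of Proposition~\ref{P:Caruso} keeps only even $k$ for precisely this reason.

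The paper's actual proof avoids the issue entirely by staying inside the rigid world. It quotes from \cite{CarusoWiestGeneric2} that a generic element of $B_n^+$ admits a \emph{non-intrusive conjugation to a rigid braid}, and that the untouched middle fifth of that rigid conjugate generically contains any prescribed subword --- in particular one moving the base point of $\mathcal C_{AL}(B_n)$ by more than $5R=195$. Lemma~\ref{L:NonPaObvious} (Claim~B), which genuinely requires rigidity because it applies the geodesic words hypothesis to the words $w^n$, then forces the rigid conjugate to act loxodromically, and loxodromicity is a conjugacy invariant. In other words, the paper reruns the proof of Theorem~5.1 of \cite{CarusoWiestGeneric2} verbatim, replacing only the criterion ``contains $\gamma_1$ or $\gamma_2$, hence pseudo-Anosov'' by ``contains a subword displacing the base point by more than $195$, hence loxodromic''. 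If you want to salvage your version, route the argument through the rigid conjugate rather than through powers of $u\beta v$ itself.
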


\begin{proof} 
The new ingredients that we shall use are the action of $B_n$ on~$\mathcal C_{AL}(B_n)$, and Lemma~\ref{L:NonPaObvious} from the present paper. By contrast, we will avoid using results from~\cite{GonzalezMenesesWiest2}.

Throughout this proof, by ``generic elements'' we mean ``a proportion of elements which tends to 1 exponentially quickly''. As seen in~\cite{CarusoWiestGeneric2}, generic elements of~$B_n^+$ have a so-called \emph{non-intrusive conjugation} to a rigid element. Moreover, the middle fifth of this conjugated element, which has not been touched by the conjugation, contains generically any given braid as a subbraid -- for instance a braid whose action moves the base point of $\mathcal C_{AL}(B_n)$ by more than $5\cdot R=195$. By Lemma~\ref{L:NonPaObvious}, these generic elements act loxodromically on~$\mathcal C_{AL}(B_n)$. This completes the proof of Proposition~\ref{P:CarusoW}.

In other words, we can use literally the same proof as the proof of Theorem~5.1 in~\cite{CarusoWiestGeneric2}, with one exception: Lemma 5.2 of~\cite{CarusoWiestGeneric2} uses the fact that a rigid braid whose normal form contains one of two very particular subwords (called $\gamma_1$ and $\gamma_2$ in~\cite{CarusoGeneric1}) must be pseudo-Anosov. Instead, we now use the fact that a rigid braid whose normal form contains another subword, namely one whose action moves the base point of $\mathcal C_{AL}(B_n)$ by more than~$195$, must be pseudo-Anosov. 
\end{proof}


What is the significant advantage of our new proofs of Propositions~\ref{P:Caruso} and~\ref{P:CarusoW}? The proofs which were given in~\cite{CarusoGeneric1} and~\cite{CarusoWiestGeneric2} both relied heavily on a result of~\cite{GonzalezMenesesWiest2} (Theorem 5.16), which is very specific to braid groups. It states that if a rigid braid is reducible, then its reducibility is obvious (there is a round or ``almost round'' reducing curve). 
In our current proof, this result is no longer used; instead, our Claim~A plays a similar r\^ole. 
(Similarly, our Claim~B can be thought of as an analogue of the theorem of Bernadete-Guttierez-Nitecki~\cite{BGN,CalvezDual}.)
We believe that our new proof of the genericity of pseudo-Anosov braids can be generalized so as to apply to many other groups, e.g.\  to Artin-Tits groups of spherical type. 


\subsection{Fundamental groups of locally-CAT(0) cubical complexes acting on the contact graph}

In this section we consider a locally finite, non-positively curved (or ``locally CAT(0)'') cubical complex~$Y$, equipped with a base point~$b$. We shall look at the fundamental group $\pi_1(Y,b)$. For instance, $Y$ could be the the defining complex, the so-called Salvetti complex, of a right-angled Artin group $A=\pi_1(Y)$, which has one $n$-cube for every $n$-tuple of pairwise commuting generators.

We have already studied this general setup in Example~\ref{E:pi_1(CAT(0))}. There we saw that the associated finite state automaton may or may not be recurrent, but if it is, then the automatic normal form hypothesis is satisfied. 

The universal cover of~$Y$ is a CAT(0) cubical complex~$\widetilde Y$. 
Now  the fundamental group $G=\pi_1(Y)$ acts on~$\widetilde Y$ by deck-transformations, but unfortunately, if we want the fundamental \emph{groupoid} of $Y$, which in Example~\ref{E:pi_1(CAT(0))} was called~$\hat G$, to act on~$\widetilde Y$, we must assume that~$Y$ has only a single vertex. 
In order to circumvent this problem, we use the following trick.

We define $Y'$ to be the cubical complex obtained from the complex~$Y$ by identifying all its vertices. This new complex~$Y'$ is still locally CAT(0), as Gromov's link condition in still satisfied. If~$Y$ has $v$ vertices, then the universal cover $\widetilde {Y'}$ consists of infinitely many copies of~$\widetilde Y$, glued together in a tree-like fashion, so that at each vertex, $v$~copies of~$\widetilde Y$ meet. 
Any choice of a base point~$\tilde b$ of~$\widetilde {Y'}$ covering the vertex of~$Y'$ induces a choice of a distinguished copy of $\widetilde Y$ in~$\widetilde {Y'}$, which we call~$\widetilde Y_*$: it consists of the set of end-points of lifts, starting at~$\tilde b$, of paths in~$Y$ starting at~$b$.

The natural glueing map $Y\to Y'$ induces an \emph{injective} group homomorphism $\pi_1(Y)\hookrightarrow \pi_1(Y')$. It even induces an injective groupoid homomorphism from the fundamental groupoid~$\hat G$ of~$Y$ to the fundamental group $\pi_1(Y')$. 
Therefore, we obtain an action of the groupoid~$\hat G$ on the universal cover $\widetilde {Y'}$ of~$Y'$ by deck-transformations. 
The action of an element~$g\in\hat G$ on~$\widetilde {Y'}$ preserves the distinguished copy $\widetilde Y_*$ of~$\widetilde Y$ if and only if $g\in G$, i.e. if the path in~$Y$ representing~$g$ terminates in the base point~$b$, not some other vertex. Nevertheless, for any $g\in \hat G$, the vertex $g.\tilde b$ of~$\widetilde {Y'}$ lies in $\widetilde Y_*$.

Now we recall some definitions and  results due to Hagen~\cite{Hagen13,Hagen14}, see also~\cite{BehrstockHagenSisto} for a nice overview. The \emph{contact graph} $\mathcal C\widetilde Y$ of~$\widetilde Y$ is the graph having one vertex for every hyperplane of~$\widetilde Y$, and two vertices are connected by an edge if the corresponding hyperplanes are not separated by a third hyperplane (in other words, if they intersect or osculate). It was proven by Hagen that the contact graph is always connected and $\delta$-hyperbolic -- in fact, it is quasi-isometric to a tree. In particular, there are no parabolic isometries of~$\mathcal C\widetilde Y$, only elliptic and loxodromic ones. (See \cite[Theorem~2.8]{BehrstockHagenSisto} for more detailed descriptions of each of these two cases.)

Similarly, we have a contact graph~$\mathcal C\widetilde {Y'}$, and the distinguished copy $\widetilde Y_*$ of $\widetilde Y$ in $\widetilde {Y'}$ gives rise to a distinguished copy $\mathcal C\widetilde Y_*$ of $\mathcal C\widetilde Y$ in $\mathcal C\widetilde {Y'}$. The fundamental groupoid~$\hat G$ of~$Y$ acts on~$\mathcal C\widetilde {Y'}$ by graph isomorphisms, and an element $g$ of~$\hat G$ preserves the subset~$\mathcal C\widetilde Y$ if and only if~$g\in G=\pi_1(Y)$. 

\begin{proposition}
Suppose $Y$ is a non-positively curved cubical complex such that, firstly, the associated finite-state automaton is recurrent, and secondly, the contact graph~$\mathcal C\widetilde Y$ is of infinite diameter. Let~$B(l)$ be the set of elements of~$\pi_1(Y)$ which can be written as a product of at most~$l$ elements of~$\mathcal S$ (at most~$l$ cube diagonals). Then the $\liminf_{l\to\infty}$ of the proportion of elements of~$B(l)$ which act loxodromically on~$\mathcal C\widetilde Y$ and whose axis passes at distance $R=3$ from the base point  is strictly positive.
\end{proposition}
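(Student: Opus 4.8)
The plan is to realize this as an instance of Theorem~\ref{T:main}. I would let $\hat G$ be the fundamental groupoid of~$Y$ acting on the $\delta$-hyperbolic space $X=\mathcal C\widetilde{Y'}$, with $G=\pi_1(Y,b)$ the sub-monoid (in fact subgroup) of loops, and take as base point~$P$ a hyperplane of~$\widetilde{Y'}$ adjacent to~$\tilde b$, which lies in the distinguished copy $\mathcal C\widetilde Y_*\cong\mathcal C\widetilde Y$. Three hypotheses must then be checked: the automatic normal form hypothesis, the geodesic words hypothesis, and the existence of interesting loxodromic actions. Granting these, Theorem~\ref{T:main} and Remark~\ref{R:SphereBall} give the conclusion for the set of elements whose normal form has length at most~$l$; since the left-greedy normal form of~\cite{NibloReeves} is geodesic in the cube-diagonal generating set~$\mathcal S$, its syllable length coincides with the $\mathcal S$-word length, so this set is exactly the $B(l)$ of the statement. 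Finally, every $g\in G$ preserves $\mathcal C\widetilde Y_*$, so a loxodromic action of~$g$ on~$X$ restricts to a loxodromic action on~$\mathcal C\widetilde Y$, and the distance-$R$ bound on the axis furnished by Proposition~\ref{P:rigid} transfers back to $\mathcal C\widetilde Y$.

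The automatic normal form hypothesis is the easiest point: by Example~\ref{E:pi_1(CAT(0))} the recurrence of the associated automaton is precisely what is needed, since the criterion of Example~\ref{E:RecurrentANFH} then applies to the automaton recognizing the Niblo--Reeves normal form.

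The delicate step, and the one I expect to be the main obstacle, is the geodesic words hypothesis with the \emph{explicit} constant $R=3$. The idea is that a normal form word, read as a concatenation of cube diagonals, traces a combinatorial geodesic in~$\widetilde{Y'}$, and the sequence of hyperplanes it crosses, read in order, is a path in the contact graph whose consecutive vertices are adjacent (hyperplanes crossed inside a common cube pairwise cross, hence sit at distance~$1$ in $\mathcal C\widetilde{Y'}$). By Hagen's analysis of the contact graph~\cite{Hagen13,Hagen14} (see also~\cite{BehrstockHagenSisto}), such hyperplane sequences are uniform unparametrized quasi-geodesics, and because $\mathcal C\widetilde Y$ is quasi-isometric to a tree the resulting shadowing constant can be made explicit. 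Pinning this constant down to $R=3$ is where the real work lies: one must control, in tree-like fashion, how far the hyperplane-crossing path can stray from an actual contact-graph geodesic between its endpoints.

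Finally, I must produce interesting loxodromic actions. Since $Y$ is compact, $\pi_1(Y)$ acts cocompactly on~$\widetilde Y$ and hence coboundedly on the infinite-diameter graph~$\mathcal C\widetilde Y$; as the contact graph admits no parabolic isometries (it is quasi-isometric to a tree, so only elliptic and loxodromic isometries occur), the standard classification of cobounded actions forces the existence of a loxodromically acting element $h\in G$. Because the automaton is recurrent, its accessible sub-automaton is all of it apart from the start and fail states, and a short argument---passing to a suitable power of~$h$ and using recurrence to close up its normal form into a loop based at a single state---produces a loop in the accessible sub-automaton whose product acts loxodromically. With all three hypotheses verified, Theorem~\ref{T:main} together with Proposition~\ref{P:rigid} (whose parabolic exclusion rests on Lemma~\ref{L:NonPaObvious}) yields the claim.
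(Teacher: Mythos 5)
Your overall architecture matches the paper's: realize the statement as an instance of Theorem~\ref{T:main} for the fundamental groupoid~$\hat G$ acting on $\mathcal C\widetilde{Y'}$, check the three hypotheses, and transfer back to $\mathcal C\widetilde Y$. The automatic normal form step is handled exactly as in the paper (via Examples~\ref{E:RecurrentANFH} and~\ref{E:pi_1(CAT(0))}). But there is a genuine gap at the step you yourself flag as ``where the real work lies'': you never actually establish the geodesic words hypothesis, let alone with the constant $R=3$, and the device that makes this work in the paper is absent from your setup. The paper does \emph{not} take a single hyperplane adjacent to~$\tilde b$ as base point; it tracks the orbit of a base \emph{simplex} $\mathcal P$, namely the full collection of hyperplanes adjacent to~$\tilde b$. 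The point is that each simplex $\gamma_1\cdots\gamma_i.\mathcal P$ in the trace contains a hyperplane separating $\tilde b$ from $\gamma.\tilde b$ and, conversely, every separating hyperplane occurs in some simplex of the trace; hence the trace is at Hausdorff distance~$1$ from the set of separating hyperplanes, so any two minimal-length words for $\gamma$ have traces at Hausdorff distance~$2$, and by Theorem~A(2) of~\cite{BehrstockHagenSisto} some minimal-length word has trace at Hausdorff distance~$1$ from a geodesic --- summing to the Hausdorff distance~$3$ that becomes $R=3$ (applied to all subwords, not just the whole word). With a single-hyperplane base point this bookkeeping does not go through, because one cube diagonal crosses several hyperplanes at once and the orbit of a single vertex does not coarsely realize the separating set. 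Your appeal to ``Hagen's analysis'' plus quasi-tree-ness gives at best an unspecified constant, which is not enough for the stated $R=3$.

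A secondary, patchable issue concerns the existence of interesting loxodromic actions. You derive a loxodromic element from coboundedness plus the absence of parabolics; that classification step is plausible for quasi-trees but is itself nontrivial, and more importantly your ``close up the normal form of $h^N$ into a loop'' step does not explain why the resulting element (which is $h^N$ times a connector, not $h^N$) still acts loxodromically. The paper avoids both difficulties: it only needs, by Lemma~\ref{L:NonPaObvious}, a \emph{loop} word whose action displaces the base simplex by more than $5R=15$. It produces one directly from the infinite-diameter hypothesis --- choose $\gamma$ displacing $\mathcal P$ by at least $5R+\delta$, where $\delta$ bounds the displacement of a fixed finite family of connector words supplied by recurrence, close up, and delete the first letter --- with loxodromicity then forced by Lemma~\ref{L:NonPaObvious} rather than assumed in advance. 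You should restructure this step along those lines, and in any case the $R=3$ computation must be supplied before the proposition can be considered proved.
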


\begin{proof}
The action of an element $g\in\pi_1(Y)$ on~$\mathcal C\widetilde {Y'}$ is loxodromic if and only if its action on~$\mathcal C\widetilde Y$ is loxodromic. Thus for the rest of the proof we shall study the action of~$\pi_1(Y)$ on~$\mathcal C\widetilde {Y'}$ (rather than on~$\mathcal C\widetilde Y$), and the proportion of elements which act loxodromically.
We have already seen in Example~\ref{E:pi_1(CAT(0))} that the automatic normal form hypothesis of Theorem~\ref{T:main} is verified. We still have to prove the geodesic words hypothesis, and the hypothesis that interesting loxodromic actions exist.

First we define a base point in~$\mathcal C\widetilde {Y'}$. In fact, we shall not be very interested in choosing a base \emph{point}, but a base \emph{simplex} $\mathcal P$ in~$\mathcal C\widetilde {Y'}$, i.e.\ a finite collection of points which are all at distance one from each other. By definition, the vertices of our base simplex are those corresponding to all hyperplanes in~$\widetilde {Y'}$ that are adjacent to the base point~$\tilde b$. (As a base point of~$\mathcal C\widetilde Y$ one can then choose any vertex of~$\mathcal P$ belonging to~$\mathcal C\widetilde Y$.)

Now let~$\gamma$ be an element of~$\pi_1(Y)$, and let $\gamma=\gamma_1\cdot\ldots\cdot\gamma_k$ be a shortest possible decomposition of~$\gamma$ as a product of our generators~$\mathcal S$ of~$\hat G$ -- for instance the normal form of~$\gamma$. 
The trace of the base simplex~$\mathcal P$ under the normal form of~$\gamma$, i.e.\ the sequence of simplices $1.\mathcal P$, $\gamma_1.\mathcal P$, $\gamma_1\gamma_2.\mathcal P,\ldots,\gamma.\mathcal P$, has the following property: 
each of the simplices contains at least one vertex\marg{explain more?} which represents a hyperplane separating $\tilde b$ and~$\gamma.\tilde b$, and conversely every such hyperplane appears as a vertex of at least one of the simplices. 
Thus the trace of the base simplex under the action of $\gamma_1 \gamma_2\gamma_3\ldots$ and the set of hyperplanes separating $\tilde b$ and $\gamma.\tilde b$ are at Hausdorff distance~1 in~$\mathcal C\widetilde {Y'}$. 

This implies that, if $\gamma=\gamma_1\cdot\ldots\cdot\gamma_k$ and $\gamma=\gamma'_1\cdot\ldots\cdot \gamma'_k$ are two different representatives of the same minimal length, then the traces of the base simplex under these two words are at Hausdorff distance at most~2 in~$\mathcal C\widetilde {Y'}$. 
However, according to Theorem~A(2) of~\cite{BehrstockHagenSisto}, there is such a word for which the trace of the base simplex is at Hausdorff distance~1 from a \emph{geodesic} in~$\mathcal C\widetilde {Y'}$. 
In summary, the trace of the base simplex under the action of the normal form word representing~$\gamma$ stays at Hausdorff distance~3 from a geodesic in~$\mathcal C \widetilde {Y'}$ between points of~$\mathcal P$ and $\gamma.\mathcal P$.

By exactly the same argument, we have more generally that the sequence of simplices $\gamma_1\cdot\ldots\cdot \gamma_i.\mathcal P$, $\ldots$, $\gamma_1\cdot\ldots\cdot\gamma_j.\mathcal P$ in~$\mathcal C \widetilde {Y'}$ (for $1\leqslant i<j\leqslant k$) is uniformly close to a geodesic between points of $\gamma_1\cdot\ldots\cdot \gamma_i.\mathcal P$ and $\gamma_1\cdot\ldots\cdot\gamma_j.\mathcal P$.
We can conclude that the geodesic words hypothesis is satisfied with~$R=3$. 

Now we need to check the hypothesis that interesting loxodromic actions exist. By Lemma~\ref{L:NonPaObvious} it suffices to construct a word with letters in~$\mathcal S$ which 
is read along 
a \emph{loop} in the finite state automaton 
and whose action moves the base-simplex~$\mathcal P$ by 
more than $5\cdot R$, i.e.\ by more than 15.

Here is a way to perform this construction: since the automaton is recurrent, we can choose a finite family of words over the alphabet~$\mathcal S$, one for every ordered pair of states, tracing out a path in the FSA from the first state to the second. 
Since we chose only finitely many such words, there exists some~$\delta>0$ such that all the words move~$\mathcal P$ by at most~$\delta$. 
Now take an element~$\gamma$ of~$\pi_1(Y)$ whose action action moves~$\mathcal P$ by at least~$5\cdot R+\delta$ -- this exists, since~$\mathcal C\widetilde{Y'}$ is of infinite diameter. Let $w_1$ be the normal form word representing~$\gamma$. 
Let $S_1$ be the state of the automaton reached from the start state after reading the first letter of~$w_1$, and let $S_2$ be the state attained after reading the whole word~$w_1$. Let $w_2$ be the word from our finite family that allows us to move from $S_2$ to~$S_1$. Finally, let $w$ be the word obtained by removing the first letter from the concatenated word $w_1w_2$. This word~$w$ traces out a loop in the automaton which passes through the states $S_1$ and~$S_2$, and it still moves the base simplex by at least~$5\cdot R$. 
\end{proof}

{\bf Acknowledgements } I thank the following people for helpful remarks and conversations: Sandrine Caruso, Matthieu Calvez, Vincent Guirardel, Alessandro Sisto, Camille Horbez, Saul Schleimer, Thierry Coulbois, Lee Mosher, Alexandre Martin, Hamish Short, Serge Cantat, Christophe Dupont.
This research was supported by the grant ANR LAM (ANR-10-JCJC-0110).


\end{document}